\renewcommand\theequation{\thesection.\@arabic\c@equation}
\newcommand{\RR}{\mathbb{R}}
\newtheorem{theorem}{Theorem}[section]
\newtheorem{lemma}[theorem]{Lemma}
\newtheorem{proposition}[theorem]{Proposition}
\theoremstyle{remark}
\newtheorem{remark}{Remark}[section]
\theoremstyle{definition}
\newtheorem{definition}{Definition}[section]
\numberwithin{equation}{section}
\renewcommand{\a}{\alpha}
\newcommand{\Ll}{{\mathcal L}}
\begin{document}

\title[Dynamics for the energy-critical heat equation]{Global dynamics for the energy-critical nonlinear heat equation}

\author[M. Ikeda]{Masahiro Ikeda}
\address[M. Ikeda]{Graduate School of Information Science and Technology, Osaka University, 1-5, Yamadaoka, Suita-shi, Osaka 565-0871, Japan/Center for Advanced Intelligence Project RIKEN, 1-4-1, Nihonbashi, Chuo-ku, Tokyo 103-0027, Japan.}
\email{ikeda@ist.osaka-u.ac.jp/masahiro.ikeda@a.riken.jp}

\author[C. J. Niche]{C\'esar J. Niche}
\address[C.J. Niche]{Departamento de Matem\'atica Aplicada, Instituto de Matem\'atica. Universidade Federal do Rio de Janeiro, CEP 21941-909, Rio de Janeiro - RJ, Brazil}
\email{cniche@im.ufrj.br}

\author[G. Planas]{Gabriela Planas}
\address[G. Planas]{Departamento de Matem\'atica, Instituto de Matem\'atica, Estat\'{\i}stica e
Computa\c{c}\~ao Cient\'{\i}fica, Universidade Estadual de Campinas, Rua Sergio Buarque de Holanda, 651, 13083-859, Campinas - SP, Brazil}
\email{gplanas@unicamp.br}

\thanks{M. Ikeda has been supported by JSPS KAKENHI Grant Number JP 23K03174.  C.J. Niche acknowledges support from  PROEX - CAPES. G. Planas was partially supported by CNPq-Brazil grant 310274/2021-4, and FAPESP-Brazil grant 19/02512-5}

\keywords{Nonlinear heat equation, Global existence, Dissipation, Decay rates, Blow-up}

\subjclass[2020]{35B40, 35K55, 35K05}

\date{\today}

\begin{abstract}
We examine the energy-critical nonlinear heat equation in the critical space $\dot{H}^1 (\RR ^d)$  for any dimension $d \geq 3$. The aim of this paper is two-fold.  First, we establish a necessary and sufficient condition on initial data at or below the ground state that dichotomizes the behavior of solutions. Specifically, this criterion determines whether the solution will either exist globally with energy decaying to zero over time or blow up in finite time. Secondly, we derive the decay rate for solutions that exist globally.
These results offer a comprehensive characterization of solution behavior for energy-critical conditions in higher-dimensional settings.
\end{abstract}

\maketitle

\section{Introduction}

We consider the Cauchy problem for the energy-critical nonlinear heat equation on $\mathbb R^d$,
\begin{align}
\label{eq:critical-nonlinear-heat}
\begin{cases}
\partial _t u    =  \Delta u +|u|^{2^* - 2} u,&(x,t)\in \mathbb{R}^d\times (0,\infty),\\
u(x,0) = u_0 (x) \in \dot{H}^1 (\mathbb R^d),
\end{cases}
\end{align}
where $d \geq 3$ and  $2^*=2^*(d) =\frac{2d}{d-2}$ is the critical Sobolev exponent.  This equation is energy-critical because the energy
\begin{equation}
\label{eq:energy}
E( u(t)) = \frac{1}{2} \int _{\RR^d}  |\nabla u (t) |^2  \, dx - \frac{1}{2^*} \int _{\RR^d}  |u (t)|^{2^*} \, dx
\end{equation}
is invariant under the natural scaling
\begin{equation}
\label{eq:scaling}
u _{\lambda} (x,t) = \lambda ^{\frac{2}{2^* - 2 }} u (\lambda x, \lambda^2 t) = \lambda ^{\frac{d-2}{2}} u (\lambda x, \lambda^2 t), \quad \lambda > 0.
\end{equation}
Notice that both terms  in \eqref{eq:energy} are invariant under \eqref{eq:scaling}.  This problem is well posed in the sense of Hadamard,  as proved by Chikami, Ikeda and Taniguchi \cite{MR4331259}.  Namely,  for any $u_0 \in \dot{H}^1(\RR^d)$,  there exists a maximal existence time $T_m = T_m (u_0)$ such that \eqref{eq:critical-nonlinear-heat} has a unique mild solution defined in  $I_m = [0,  T_m)$,  which is stable under perturbations of the initial datum.  This solution becomes global in time under conditions which hold,  for example,  if $\|u_0\|_{\dot{H}^1}$ is small enough.  In this case,  we also have
\begin{displaymath}
\lim_{t\rightarrow \infty}\|u(t)\|_{\dot{H}^1}=0.
\end{displaymath}
For a complete statement of this result, see Section \ref{existence-results} and also Theorem 2.1 in Gustafson and Roxanas \cite{MR3765769} for the case $d = 4$.

Many evolution equations with a scale-invariant energy have stationary solutions with important properties.  For \eqref{eq:critical-nonlinear-heat},  such a solution is given by  the unique (up to scaling and translation) positive radial solution to the stationary problem associated with \eqref{eq:critical-nonlinear-heat},  i.e.
\begin{equation}
\label{eqn:stationary problem}
-\Delta W=|W|^{2^*-2}W,
\end{equation}
which is  given by the Aubin-Talenti bubble \cite{MR0431287,MR0463908}
\[
    W(x) = \frac{\left( d(d-2)\right)^{\frac{d-2}{4}}}{(1+|x|^2)^{\frac{d-2}{2}}}.
\]
This ground state $W$ is the extremal of the Sobolev embedding $\dot{H} ^1 (\RR ^d) \subset L^{2^*} (\RR^d)$,  i.e. $W$ realizes the equality in
\begin{displaymath}
\frac{\Vert u \Vert _{L ^{2^*}}}{ \Vert u \Vert _{\dot{H}^1}} \leq C,
\end{displaymath}
for some optimal $C > 0$.  We remark that the Pohozaev identity $\|\nabla W\|_{L^2}^2=\|W\|_{L^{2^*}}^{2^*}$ holds true for $W$.

In several significant examples of energy-critical equations, the ground state acts as a threshold that determines global existence and the dynamic behavior of solutions. This is notably true for the focusing nonlinear Schr\"odinger and nonlinear wave equations, where Kenig and Merle \cite{MR2257393}, \cite{MR2461508} established a crucial dichotomy: if the initial data has energy and $\dot{H}^1$ norm below those of the ground state, the solution is global in time and scatters,  while if the initial data has energy below that of the ground state but a larger $\dot{H}^1$ norm, then the solution experiences finite-time blowup in this norm. Their proof relies on sophisticated concentration-compactness and rigidity arguments, alongside the identification of so-called critical elements that drive this dichotomy through a contradiction argument.

For  nonlinear heat equations of the form
\[ u_t  =  \Delta u +|u|^{p-1} u, \ \quad p > 1 \]
a central question is whether solutions persist globally in time or if their norm becomes unbounded in finite time, resulting in finite-time blowup.  These equations have been extensively studied in the literature under various assumptions on the nonlinearity and initial data.  For a comprehensive overview of the problem, we refer readers to the recent book by Quittner and Souplet \cite{MR3967048}.

The critical exponent case of the nonlinear heat equation is more complicated due to the delicate balance between nonlinearity and dissipation, the increased sensitivity to initial data, and the mathematical challenges of analyzing the behavior of solutions.  Different aspects of this problem,  such as global existence,  finite and infinite time blowup,  as well as other dynamical properties of solutions to \eqref{eq:critical-nonlinear-heat} have been studied by many authors, for example  Collot,  Merle and Raphael \cite{MR3623259}, Cort\'azar, del Pino and Musso \cite{MR4046015},  del Pino,  Musso and Wei \cite{MR3952701}, Fila and King \cite{MR3004680},  Galaktionov and King \cite{MR1968320},  Harada \cite{harada2025dynamicsneargroundstate}, Mizoguchi and Yanagida \cite{MR1464136}, \cite{MR1621030},  Schweyer \cite{MR2990063} and references therein.

We now state the first main result in this article.

\begin{theorem}
\label{thm:global-dynamics}
Let $d\ge 3$ and $u\in C([0,T_m);\dot{H}{^1(\mathbb{R}^d)})$ be the unique local solution to \eqref{eq:critical-nonlinear-heat} with initial data $u_0\in \dot{H}^1(\mathbb{R}^d)$. Assume $E(u_0)\le E(W)$. Then the following statement holds:
\begin{enumerate}
\renewcommand{\theenumi}{\Roman{enumi}}
\item \label{enumi} If $\|\nabla u_0\|_{L^2}<\|\nabla W\|_{L^2}$, then $u$ is dissipative, that is $T_m=\infty$ and
\begin{displaymath}
\lim_{t\rightarrow \infty}\|u(t)\|_{\dot{H}^1}=0.
\end{displaymath}
\item  \label{enumii} If $\|\nabla u_0\|_{L^2}>\|\nabla W\|_{L^2}$ and $u_0\in L^2(\mathbb{R}^d)$, then  the $\dot{H}^1$ norm of $u$ blows up in finite time.
\end{enumerate}
\end{theorem}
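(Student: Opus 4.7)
The plan rests on the variational structure of $W$ and the energy dissipation identity
\[
\tfrac{d}{dt}E(u(t))=-\|u_t(t)\|_{L^2}^2.
\]
Let $C_S$ denote the sharp constant in $\|v\|_{L^{2^*}}\le C_S\|\nabla v\|_{L^2}$, so that $C_S^{2^*}\|\nabla W\|_{L^2}^{2^*-2}=1$ and, by Pohozaev, $E(W)=\tfrac{1}{d}\|\nabla W\|_{L^2}^2$. Throughout I will use the scalar function
\[
g(y)=\tfrac{y}{2}-\tfrac{C_S^{2^*}}{2^*}\,y^{2^*/2},
\]
which gives the lower bound $E(v)\ge g(\|\nabla v\|_{L^2}^2)$ and attains its unique maximum $g(y_0)=E(W)$ at $y_0:=\|\nabla W\|_{L^2}^2$, together with the Nehari-type functional $I(v):=\|\nabla v\|_{L^2}^2-\|v\|_{L^{2^*}}^{2^*}$, which satisfies the decomposition $E(v)=\tfrac{1}{2}I(v)+\tfrac{1}{d}\|v\|_{L^{2^*}}^{2^*}$.

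\emph{Part (I).} The trapping $\|\nabla u(t)\|_{L^2}^2<y_0$ on $[0,T_m)$ follows from the chain
\[
g(\|\nabla u(t)\|_{L^2}^2)\le E(u(t))\le E(u_0)\le E(W)=g(y_0),
\]
combined with continuity and the unimodality of $g$; in the boundary case $E(u_0)=E(W)$, equality $\|\nabla u(t^*)\|_{L^2}=\|\nabla W\|_{L^2}$ would force $u_t\equiv 0$ on $[0,t^*]$ via the dissipation identity, contradicting $\|\nabla u_0\|_{L^2}<\|\nabla W\|_{L^2}$. The resulting uniform $\dot{H}^1$ bound yields $T_m=\infty$ through the local theory in \cite{MR4331259}. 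For the decay to zero, I combine the coercivity $I(u(t))\ge(1-C_S^{2^*}\|\nabla u(t)\|_{L^2}^{2^*-2})\|\nabla u(t)\|_{L^2}^2$ with the decomposition of $E$ to obtain $E(u(t))\gtrsim\|\nabla u(t)\|_{L^2}^2$ once $E(u(t))$ drops strictly below $E(W)$. Since $E$ is nonincreasing and $\int_0^\infty\|u_t\|_{L^2}^2\,ds<\infty$, I extract $t_n\to\infty$ with $u_t(t_n)\to 0$ in $L^2$; a Kenig--Merle type concentration-compactness analysis then identifies any weak $\dot{H}^1$ limit of $u(t_n)$ as a stationary solution of \eqref{eqn:stationary problem}. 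Because nontrivial stationary solutions satisfy $\|\nabla v\|_{L^2}\ge\|\nabla W\|_{L^2}$, the limit must be $0$, giving $\|\nabla u(t)\|_{L^2}\to 0$.

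\emph{Part (II).} The symmetric variational analysis yields the reverse trapping $\|\nabla u(t)\|_{L^2}^2>y_0$ on $[0,T_m)$ and the sharp quantitative bound
\[
I(u(t))\le -\tfrac{2}{d-2}\bigl(\|\nabla u(t)\|_{L^2}^2-\|\nabla W\|_{L^2}^2\bigr)\le -c_0<0,
\]
obtained by inserting $E(u(t))\le E(W)$ into $E=\tfrac{1}{2}I+\tfrac{1}{d}\|\cdot\|_{L^{2^*}}^{2^*}$. Since $u_0\in L^2$, $u(t)$ remains in $L^2$ on $[0,T_m)$, and testing the equation against $u$ gives $\tfrac{1}{2}\tfrac{d}{dt}\|u(t)\|_{L^2}^2=-I(u(t))\ge c_0>0$. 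To upgrade this to finite-time $\dot{H}^1$ blow-up I apply a Levine-type concavity method: for suitable parameters $T_0,\beta,\tau>0$ I set
\[
M(t):=\int_0^t\|u(s)\|_{L^2}^2\,ds+(T_0-t)\|u_0\|_{L^2}^2+\beta(t+\tau)^2,
\]
and verify a differential inequality $M(t)M''(t)\ge(1+\gamma)M'(t)^2$ for some $\gamma>0$. This forces the concave, positive function $M^{-\gamma}$ to vanish in finite time, so $M\to\infty$ in finite time and hence $T_m<\infty$; combined with the $\dot{H}^1$ blow-up criterion from \cite{MR4331259}, this yields the finite-time blow-up of $\|u(t)\|_{\dot{H}^1}$.

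\emph{Main obstacles.} The two delicate points are: in Part (I), the decay-to-zero step, where the lack of compact Sobolev embedding at the critical exponent forces a profile decomposition to rule out bubbling along $u(t_n)$; and in Part (II), closing the concavity inequality $MM''\ge(1+\gamma)(M')^2$, which requires a careful balance between $E(u_0)\le E(W)$ and the cross terms generated by $M''$, especially in the boundary case $E(u_0)=E(W)$ where the variational gap degenerates to zero.
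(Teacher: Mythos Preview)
Your Part (II) is essentially what the paper does (it invokes Levine's concavity argument and omits the details), so that part is fine.

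The substantive problem is in Part (I). You write ``The resulting uniform $\dot{H}^1$ bound yields $T_m=\infty$ through the local theory in \cite{MR4331259}.'' This is false in the energy-critical setting, and it is precisely the heart of the difficulty. The local existence time in Proposition~\ref{prop:wellposed1} depends on the initial datum through the quantity $\|e^{t\Delta}u_0\|_{\mathcal K^q}$, not merely through $\|u_0\|_{\dot H^1}$; the blow-up criterion (iv) is stated in terms of $\|u\|_{\mathcal K^q(T_m)}$, and there is no statement that a uniform $\dot H^1$ bound forces $T_m=\infty$. In the subcritical case your step would work, but at the critical exponent a bounded-energy solution can in principle concentrate so that the $\mathcal K^q$ norm diverges while $\|\nabla u(t)\|_{L^2}$ stays bounded. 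Ruling this out is exactly what the Kenig--Merle machinery is for, and you cannot separate ``global existence'' from ``decay'' the way you do.

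The paper handles global existence and dissipation simultaneously: it defines the critical energy level $E^c$ as the supremum of $E$ below which every solution with data in $\mathcal M_E^+$ satisfies $T_m=\infty$ \emph{and} $\|u\|_{\mathcal K^q}<\infty$, assumes $E^c<E(W)$, and derives a contradiction. A minimizing sequence $\{\phi_n\}$ with $E(\phi_n)\searrow E^c$ and $\|u_n\|_{\mathcal K^q}=\infty$ is run through the linear profile decomposition; a perturbation lemma (Proposition~\ref{prop:perturbation}) shows there must be at least one nonlinear profile $v^{j_0}$ with infinite $\mathcal K^q$ norm, and energy decoupling forces it to be unique with $E(\psi^{j_0})=E^c$. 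The rigidity step then shows that this critical element must satisfy $E(v^c(t))\equiv E^c$, hence be stationary by the energy identity, hence be zero since $E^c<E(W)$, contradicting $\|v^c\|_{\mathcal K^q}=\infty$. Your Palais--Smale idea (extract $t_n$ with $u_t(t_n)\to 0$ and rule out bubbling) is in the right spirit for the decay step, but it presupposes $T_m=\infty$; to close the argument you must embed it into a scheme that controls the $\mathcal K^q$ norm, which is what the paper's nonlinear profile decomposition plus long-time perturbation theory accomplishes.
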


In Theorem  \ref{thm:global-dynamics} we have thus established that for the critical exponent in the nonlinearity in \eqref{eq:critical-nonlinear-heat}, the dynamical behaviour of solutions in the critical space $\dot{H}{^1(\mathbb{R}^d)}$ is described by a dichotomy ``dissipativity versus blowup'', for any $d \geq 3$. For $d = 4$,  Gustafson and Roxanas \cite{MR3765769} proved the dissipativity in Theorem \ref{thm:global-dynamics} \eqref{enumi} and besides,  assuming the strict inequality $E(u_0) < E(W)$,   proved Theorem \ref{thm:global-dynamics} \eqref{enumii}. They also indicated that these results should hold for any dimension $d \geq 3$.  In Theorem  \ref{thm:global-dynamics}, we achieve this, and moreover we show that solutions with the same energy as the ground state blow up if they have larger $\dot{H}^1$ norm.  Note that due to the Sobolev Embedding,  conditions $E(u_0)\le E(W)$ and $\|\nabla u_0\|_{L^2}<\|\nabla W\|_{L^2}$ determine a non-empty set. For proving the dissipativity of $u$  we follow ideas along the lines of those in Theorem 1.1 in Chikami, Ikeda and Taniguchi \cite{MR4331259},  where the authors proved an analogous result for the critical Hardy-Sobolev parabolic equation,  i.e.
\begin{displaymath}
\partial _t u    =  \Delta u +|x|^{- \gamma} |u|^{2^* (\gamma) - 2} u,  \quad (x,t) \in \mathbb{R}^d\times (0,\infty),\\
\end{displaymath}
where $0 < \gamma < 2$ and $2^*(\gamma) = \frac{2(d-\gamma)}{d-2}$.  Notice that unlike solutions to \eqref{eq:critical-nonlinear-heat},  solutions to this equation are not translation invariant,  which leads to the need for  a nontrivial adaptation of the ideas in Chikami, Ikeda and Taniguchi \cite{MR4331259}.  The strategy of the proof is based on the principles of concentration-compactness and rigidity,  see Kenig and Merle \cite{MR2257393}, \cite{MR2461508}.  We highlight that the rigidity argument of Chikami, Ikeda and Taniguchi \cite{MR4331259}  is simpler and more flexible than those found in existing literature, as it avoids the need for the backward uniqueness of parabolic equations, cf. Gustafson and Roxanas \cite{MR3765769}.

We now compare Theorem \ref{thm:global-dynamics} to previously known results. In the more general context of $p$-Laplacians, Ishiwata \cite{Ishiwata2008} proved dissipativity in the $\dot{H}^1$ norm and blowup in the $L^{\infty}$ norm for solutions to \eqref{eq:critical-nonlinear-heat}, but under the assumption $u_0 \in H^1 (\mathbb{R}^d) \cap L^{\infty} (\mathbb{R}^d)$. Ikehata, Ishiwata and Suzuki \cite{IkehataIshiwataSuzuki2010} and Ishiwata and Suzuki \cite{IshiwataSuzuki2013} studied dynamical behaviour of solutions in the $L^{\infty}$ norm through dichotomy for initial data in the space $H^1 (e^{|x|^2/4} \, dx), d \geq 3$ (see also Fang and Zhang \cite{MR4400921}). The problem of blow-up and grow-up of solutions in the $L^{\infty}$ norm in various dimensions has been addressed by del Pino,  Musso and Wei \cite{MR4047646},  \cite{MR4307216},  del Pino,  Musso, Wei and Zhou \cite{MR4097503}, Filippas, Herrero and Vel\'azquez \cite{MR1843848}, Harada \cite{MR4072807}, \cite{MR4151635},  Li, Wei, Zhang and Zhou \cite{LI2024113594},  and Wei, Zhang and Zhou \cite{MR4725137}. 

In order to state our second main Theorem,  we need to recall the idea of decay character $r^{\ast} = r^{\ast} (f)$ of $f \in L^2 (\RR^d)$,  which roughly speaking says that  $|\widehat{f} (\xi)| \sim |\xi|^{r^{\ast}}$ near $\xi = 0$. This quantity leads to upper and lower bounds for the decay of solutions to dissipative linear equations with initial datum $f$, and this,  in turn,  is an important tool for proving decay of solutions to dissipative nonlinear equations; for details, see Section   \ref{DecayCharacter}.

\begin{theorem}
\label{thm:decay}
Let $ d\geq 3 $ and $u$ be a dissipative solution as in Theorem \ref{thm:global-dynamics} \eqref{enumi}.  Let $q^{\ast} = r^{\ast} \left( \Lambda u_0\right) > -\frac{d}{2}$,  where $\Lambda = (- \Delta) ^{1/2}$.  Then  we have
\[
\Vert u (t) \Vert ^2 _{\dot{H} ^1} \leq \left\{ \begin{array}{ll}C (1+t) ^{- \min \left\{ \frac{d}{2} + q^{\ast}, 1 \right\}} , & d \leq 10\\
C [\ln (e+t)]^{-2}, \, & d > 10,
 \end{array}\right.
\]
for large enough $t$.
\end{theorem}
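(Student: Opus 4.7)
The plan is to combine the Fourier splitting method of Schonbek with the decay character machinery of Section \ref{DecayCharacter}, together with a bootstrap iteration to reach the saturation rates in the statement. First, I derive an energy inequality by testing \eqref{eq:critical-nonlinear-heat} against $-\Delta u$ and integrating by parts:
\begin{equation*}
\frac{1}{2}\frac{d}{dt}\|u\|_{\dot{H}^1}^2 + \|u\|_{\dot{H}^2}^2 = (2^*-1)\int_{\RR^d}|u|^{2^*-2}|\nabla u|^2\, dx \leq C\|u\|_{\dot{H}^1}^{2^*-2}\|u\|_{\dot{H}^2}^2,
\end{equation*}
where the last inequality uses H\"older together with the Sobolev embedding $\dot{H}^2\hookrightarrow \dot{W}^{1,2^*}$. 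Since $\|u(t)\|_{\dot{H}^1}\to 0$ by Theorem \ref{thm:global-dynamics}\eqref{enumi}, there exists $T_0$ such that for $t\ge T_0$ the nonlinear term can be absorbed, leaving $\frac{d}{dt}\|u\|_{\dot{H}^1}^2 + \|u\|_{\dot{H}^2}^2 \le 0$.

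Next, for a time-dependent radius $\rho(t)>0$ and $B(t)=\{\xi\in\RR^d:|\xi|\le\rho(t)\}$, the elementary bound $\|u\|_{\dot{H}^2}^2 \ge \rho(t)^2\bigl(\|u\|_{\dot{H}^1}^2 - \int_{B(t)}|\xi|^2|\hat{u}|^2\, d\xi\bigr)$ gives, with $y(t):=\|u(t)\|_{\dot{H}^1}^2$,
\begin{equation*}
y'(t) + \rho(t)^2\, y(t) \le \rho(t)^2\int_{B(t)}|\xi|^2|\hat{u}(\xi,t)|^2\, d\xi.
\end{equation*}
The low-frequency integral is then controlled via the Duhamel formula for $\Lambda u$: the linear heat part is bounded by $C(1+t)^{-d/2-q^*}$ thanks to the hypothesis $q^*=r^*(\Lambda u_0)>-d/2$, while the nonlinear part, via $\||u|^{2^*-2}u\|_{L^{(2^*)'}} \lesssim \|u\|_{\dot{H}^1}^{2^*-1}$ combined with Hausdorff--Young and the low-frequency localization, reduces to a time integral of $\|u(s)\|_{\dot{H}^1}^{2(2^*-1)}$ with explicit factors in $\rho$ and $t$.

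Choosing $\rho(t)^2=\mu(1+t)^{-1}$ with $\mu$ large turns the resulting integro-differential inequality into one amenable to Gronwall's argument, yielding a first polynomial decay for $y(t)$. That rate is substituted back into the nonlinear estimate above and the procedure is iterated until no further improvement is possible. For $d\le 10$ the scheme saturates at $\min\{d/2+q^*,1\}$: the exponent $d/2+q^*$ originates in the linear heat semigroup, while the exponent $1$ reflects the fact that the nonlinear Duhamel contribution cannot decay faster than $t^{-1}$.

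The main obstacle will be the high-dimensional regime $d>10$, where the nonlinear exponent $2^*-2=4/(d-2)<1/2$ is too weak for the polynomial bootstrap to close: each iteration improves $y$ by only a logarithmic factor rather than a polynomial one. In this range, a sharper choice of cutoff such as $\rho(t)^2\sim \ln(e+t)/(1+t)$ and a careful ODE analysis are needed to extract the stated $[\ln(e+t)]^{-2}$ decay.
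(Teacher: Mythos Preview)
Your overall framework—Fourier splitting, decay character for the linear piece, Hausdorff--Young on the Duhamel term, and a Gronwall/bootstrap—is exactly the route the paper takes. The gap is in how you initialize the bootstrap. With $\rho(t)^2=\mu(1+t)^{-1}$ and only the a priori bound $\|u(s)\|_{\dot H^1}\le C$, the nonlinear low-frequency contribution is of size
\[
\rho(t)^4\Bigl(\int_0^t \|u(s)\|_{\dot H^1}^{2^*-1}\,ds\Bigr)^2\ \lesssim\ (1+t)^{-2}\cdot t^2 = O(1),
\]
so after multiplying by the integrating factor $(1+t)^\mu$ and integrating you recover only $y(t)\le C$: no ``first polynomial decay'' is produced, and there is nothing to feed back into an iteration. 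A Gronwall argument does not help here because the nonlinear term is the square of a time integral, not a linear integral of $y$.

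The paper resolves this by first choosing the \emph{logarithmic} multiplier $g(t)=[\ln(e+t)]^3$, i.e.\ $\rho(t)^2\sim ((e+t)\ln(e+t))^{-1}$. With this choice the nonlinear term becomes integrable even with only boundedness of $\|u\|_{\dot H^1}$, and one obtains the preliminary estimate $\|u(t)\|_{\dot H^1}^2\le C[\ln(e+t)]^{-2}$ valid for every $d\ge 3$. This preliminary estimate is then substituted into the nonlinear term (via $\|u\|^{2(2^*-1)}=y\cdot\|u\|^{8/(d-2)}\le y\,[\ln(e+s)]^{-8/(d-2)}$ after Cauchy--Schwarz on the squared integral), which produces a genuine Gronwall kernel $k(t)=(1+t)^{-1}[\ln(e+t)]^{-8/(d-2)}$. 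That kernel is integrable precisely when $8/(d-2)>1$, i.e.\ $d<10$, yielding the algebraic rate; the case $d=10$ is borderline and needs one extra bootstrap. For $d>10$ the kernel is not integrable and no improvement is possible: the $[\ln(e+t)]^{-2}$ bound is the \emph{preliminary} estimate itself, not the output of a ``sharper cutoff'' applied after polynomial iteration fails—so your description of that regime is inverted.
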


For proving Theorem \ref{thm:decay} we follow,  with suitable nontrivial modifications, the ideas  in Kosloff,  Niche and Planas \cite{MR4743818},  who proved the result for $d = 4$.   Roughly speaking,  we show that the $\dot{H}^1$ norm is a Lyapunov function for solutions to  \eqref{eq:critical-nonlinear-heat} and then use an appropriate version of the Fourier Splitting method to prove decay for this critical norm. See also  Ikeda,  Kosloff,  Niche and Planas \cite{MR4767596} for similar methods and results for solutions to the 3D Navier-Stokes and Navier-Stokes-Coriolis equations.  Notice that for $d > 10$ we can only prove an inverse logarithmic decay rate.  This is the first decay estimate we obtain for any $d \geq 3$,  see \eqref{eqn:first-decay}, which we then use to bootstrap in order to obtain faster decay rates.  However,   the nonlinearity
\begin{displaymath}
|u|^{2^* - 2} u = |u|^{\frac{4}{d-2}} u
\end{displaymath}
has such a structure that for $d > 10$ we cannot obtain improvements in the pointwise estimates needed.

\begin{remark}
Notice that Theorem \ref{thm:decay} also holds for the defocusing critical nonlinear heat equation
\begin{align*}
\begin{cases}
\partial _t u    =  \Delta u - |u|^{2^* - 2} u,&(x,t)\in \mathbb{R}^d\times (0,\infty),\\
u(x,0) = u_0 (x) \in \dot{H}^1 (\mathbb R^d),
\end{cases}
\end{align*}
as the proof for decay is essentially identical to that in Section \ref{proof-decay}. We know that any initial data in $\dot{H}^1 (\mathbb R^d)$ leads to a dissipative solution for this equation, see Chikami, Ikeda and Taniguchi \cite[Theorem 4.1]{MR4331259}. 
\end{remark}

This article is organized as follows. In Section \ref{settings}, we gather basic results necessary for our development. More precisely, in Section \ref{existence-results}, we review local and global existence results and energy equalities for solutions to \eqref{eq:critical-nonlinear-heat}. In Section \ref{DecayCharacter}, we introduce key definitions and establish results required to demonstrate the decay rate for dissipative solutions. Section \ref{proof-global-dynamics} is dedicated to proving Theorem \ref{thm:global-dynamics}, and in Section \ref{proof-decay}, we prove Theorem \ref{thm:decay}. Finally, in Appendix \ref{appendix-technical}, we compile technical results essential for proving our main theorems.

 \section{Settings} \label{settings}

\subsection{Existence of solutions and its properties} \label{existence-results}

 We consider the integral form of problem \eqref{eq:critical-nonlinear-heat}
\begin{equation}\label{integral-equation}
	u(t,x) =e^{t\Delta} u_0(x) + \int_{0}^t e^{(t-\tau)\Delta}  |u(\tau,x)|^{2^*-2} u(\tau,x)\,d\tau,
\end{equation}
where
$\{e^{t\Delta}\}_{t>0}$ is the free heat semigroup, defined by
\[
	e^{t\Delta} f(x) = (G(t, \cdot) * f)(x) = \int_{\RR^d} G(t,x-y)f(y)\, dy,\quad t>0,\ x\in \RR^d,
\]
and $G:(0,\infty)\times\mathbb{R}^d\rightarrow (0,\infty)$ is the heat kernel, i.e.,
\[	G(t,x) = (4 \pi t)^{-\frac{d}{2}} e^{-\frac{|x|^2}{4t}},\quad t>0,\  x\in \mathbb R^d.
\]

We introduce an auxiliary space as follows.  Let $T \in (0,\infty]$, $q\in [1,\infty]$, and $\alpha\in \RR$. The space $\mathcal{K}^{q,\alpha}(T)$ is defined by
\[
   \mathcal{K}^{q,\alpha}(T):=\left\{u\in \mathcal{D}'((0,T)\times\mathbb R^d)\ ;\ \|u\|_{\mathcal{K}^{q,\alpha}(T')}
   <\infty\ \text{for any }T' \in (0,T)\right\}
\]
endowed with
\[
\|u\|_{\mathcal K^{q,\alpha}(T)}
	=\sup_{0\le t\le T}t^{\frac{d}{2}(\frac{1}{2^*}-\frac{1}{q})+\alpha}\|u\|_{L^q},
\]
where $ \mathcal{D}'((0,T)\times\mathbb R^d)$ is the space of distributions on $[0,T)\times\mathbb R^d$.
We simply write $\mathcal{K}^{q}(T)=\mathcal{K}^{q,0}(T)$ when $\alpha=0$, and $\mathcal{K}^{q,\alpha}=\mathcal{K}^{q,\alpha}(\infty)$ and $\mathcal{K}^{q}=\mathcal{K}^{q}(\infty)$
when $T=\infty$ if they do not cause confusion.

We recall the existence result for mild solutions to \eqref{eq:critical-nonlinear-heat} obtained by Chikami, Ikeda and Taniguchi \cite{MR4331259}.

\begin{proposition}[Well-posedness in the energy space]
\label{prop:wellposed1}
Assume that $q\in (1,\infty)$ satisfies
\begin{equation}\label{l:crtHS.nonlin.est:c2}
	\frac{1}{2^*} - \frac{1}{d(2^*-1)}
	< \frac{1}{q} < \frac{1}{2^*}.
\end{equation}
Then, the following statements hold:
\begin{itemize}
\item[(i)] (Existence)
For any $u_0 \in \dot{H}^1(\RR^d)$, there exists a maximal existence time $T_m=T_m(u_0)\in (0,\infty]$ such that
there exists a unique mild solution
\[
	u\in C([0,T_m); \dot{H}^1(\RR^d))\cap \mathcal K^q(T_m)
\]
to \eqref{eq:critical-nonlinear-heat} with $u(0)=u_0$. Moreover, the solution $u$ satisfies
\[
\|u\|_{\mathcal K^{\tilde{q}}_{\tilde{r}}(T)} =
	\left(
		\int_0^T (t^{\kappa} \|u(t)\|_{L^{\tilde{q}}})^{\tilde{r}} \, dt
	\right)^\frac{1}{\tilde{r}} < \infty
\]
for any $T \in (0,T_m)$ and for any $\tilde{q}, \tilde{r} \in [1,\infty]$ satisfying \eqref{l:crtHS.nonlin.est:c2} and
\begin{equation}\label{condi-new}
	0\le  \frac1{\tilde r}  < \frac{d}2 \left(\frac1{2^*} - \frac1{\tilde q} \right),
\end{equation}
where $\kappa$ is given by
\[
\kappa = \kappa(\tilde{q},\tilde{r}) = \frac{d}{2} \left( \frac{1}{2^*} - \frac{1}{\tilde q}\right) - \frac{1}{\tilde r}.
\]

\item[(ii)] (Uniqueness in $\mathcal{K}^{q}(T)$)
Let $T>0.$ If $u_1, u_2 \in \mathcal{K}^{q}(T)$
satisfy the integral equation \eqref{integral-equation}
with $u_1(0)=u_2(0)=u_0$, then $u_1=u_2$ on $[0,T].$

\item[(iii)] (Continuous dependence on initial data)
The map $T_m : \dot{H}^1(\RR^d) \to (0,\infty]$ is
lower semicontinuous. Furthermore, for any $u_0, v_0 \in \dot{H}^1(\RR^d)$ and for any $T < \min\{T_m(u_0),T_m(v_0)\}$, there exists a constant $C>0$, depending on $\|u_0\|_{\dot H^1}$, $\|v_0\|_{\dot H^1}$, and $T$, such that
\[
	\sup_{t \in [0,T]} \|u(t) - v(t)\|_{\dot H^1}
	+
	\|u-v\|_{\mathcal K^q(T)} \le C \|u_0 - v_0\|_{\dot H^1}.
\]

\item[(iv)] (Blow-up criterion)
If $T_m < + \infty,$ then $\|u\|_{\mathcal{K}^q(T_m)} = \infty.$

\item[(v)] (Small-data global existence and dissipation)
There exists $\rho >0$ such that
if $u_0 \in \dot{H}^1(\RR^d)$ satisfies
$\|e^{t\Delta} u_0\|_{\mathcal{K}^{q}}\le \rho$,
then $T_m=+\infty$ and
\[
\|u\|_{\mathcal{K}^{q}} \le 2\rho \quad \text{and}\quad \lim_{t\to\infty}\|u(t)\|_{\dot H^1} = 0.
\]

\item[(vi)] (Dissipation of global solutions)
The following statements are equivalent:
\begin{itemize}
	\item[(a)] $T_m=+\infty$ and $\|u\|_{\mathcal K^q} < \infty$.
	\item[(b)] $\lim_{t\to T_m} \|u(t)\|_{\dot H^1}=0$.
	\item[(c)] $\lim_{t\to T_m} t^{\frac{d}{2} (\frac{1}{2^*}-\frac{1}{q})} \|u(t)\|_{L^{q}}=0$.
\end{itemize}

\item[(vii)] Let $d=3$. Suppose that $q$ satisfies the additional assumption
\[	\frac{1}{2^*} - \frac{1}{24}
	< \frac{1}{q}.\]
Then, for any $u_0 \in \dot H^1(\RR^3)$, there exists a maximal existence time $T_m=T_m(u_0)\in (0,\infty]$ such that
there exists a unique mild solution
\[
	u\in C([0,T_m); \dot{H}^1(\RR^d))\cap \mathcal K^q(T_m)\quad \text{and}\quad \partial_t u \in \mathcal K^{3,1}(T_m)
\]
to \eqref{eq:critical-nonlinear-heat} with $u(0)=u_0$. Furthermore, the solution $u$ satisfies
\[
	\partial_t u \in \mathcal K^{2,1}(T_m).
\]
\end{itemize}
\end{proposition}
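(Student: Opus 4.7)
The plan is a Kato--Fujita-type fixed point argument in the scale-invariant auxiliary space $\mathcal{K}^q(T)$, treating the Duhamel formulation \eqref{integral-equation} as a fixed point problem for
\[
\Phi(u)(t) = e^{t\Delta} u_0 + \int_0^t e^{(t-\tau)\Delta} |u(\tau)|^{2^*-2} u(\tau)\,d\tau.
\]
The ingredients I would rely on are the heat semigroup smoothing estimates $\|e^{t\Delta} f\|_{L^{q_2}} \lesssim t^{-\frac{d}{2}(\frac{1}{q_1}-\frac{1}{q_2})}\|f\|_{L^{q_1}}$ for $q_1\le q_2$, their counterparts for $\nabla e^{t\Delta}$, the pointwise inequality $\bigl||a|^{2^*-2}a - |b|^{2^*-2}b\bigr| \lesssim \bigl(|a|^{2^*-2}+|b|^{2^*-2}\bigr)|a-b|$, and the Sobolev embedding $\dot H^1(\RR^d) \hookrightarrow L^{2^*}(\RR^d)$, which together place $e^{t\Delta}u_0$ in $\mathcal{K}^q$ with a quantitative bound controlled by $\|u_0\|_{\dot H^1}$.

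\textbf{Local existence, uniqueness, and the auxiliary norms.} The first step is to check that condition \eqref{l:crtHS.nonlin.est:c2} is precisely what forces $\Phi$ to map a small ball of $\mathcal{K}^q(T)$ into itself and to be a strict contraction there for some $T=T(u_0)>0$. The core computation is to bound
\[
t^{\frac{d}{2}(\frac{1}{2^*}-\frac{1}{q})}\Bigl\|\int_0^t e^{(t-\tau)\Delta}|u|^{2^*-2}u\,d\tau\Bigr\|_{L^q} \lesssim \|u\|_{\mathcal K^q(T)}^{2^*-1}\int_0^t (t-\tau)^{-\sigma_1}\tau^{-\sigma_2}\,d\tau,
\]
where the admissibility of $\sigma_1,\sigma_2$ and the finiteness of the last integral are exactly what \eqref{l:crtHS.nonlin.est:c2} guarantees. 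Persistence of $\dot H^1$ regularity is recovered by applying the gradient smoothing estimate inside Duhamel, and the auxiliary Strichartz-like norms of part (i) follow by repeating the same computation at the admissible pairs $(\tilde q,\tilde r)$ determined by \eqref{condi-new}. Uniqueness in $\mathcal{K}^q(T)$, part (ii), is a direct consequence of the contraction applied on small time intervals and then patched.

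\textbf{Derived properties.} For continuous dependence (iii), I would subtract the fixed point equations for $u$ and $v$, apply the same multilinear estimate, and close with a Gronwall-type step; lower semicontinuity of $T_m$ is then immediate. The blow-up criterion (iv) follows by contraposition: if $\|u\|_{\mathcal K^q(T_m)}<\infty$ with $T_m<\infty$, one may restart the contraction with initial datum $u(t)$ for $t$ close to $T_m$ and extend past $T_m$, contradicting maximality. Small-data global existence (v) simply runs the contraction on $[0,\infty)$ under the hypothesis $\|e^{t\Delta}u_0\|_{\mathcal K^q}\le\rho$, and the dissipation $\|u(t)\|_{\dot H^1}\to 0$ comes from the vanishing tail $\|u\|_{\mathcal K^q([T,\infty))}\to 0$ bootstrapped back to $\dot H^1$ via the gradient smoothing bound. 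The equivalences in (vi) follow from the same circle of ideas: (a)$\Rightarrow$(c) is a tail estimate, (c)$\Rightarrow$(a) restarts small-data theory at a large time, and (a)$\Leftrightarrow$(b) is persistence in $\dot H^1$.

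\textbf{The dimension three case.} The genuinely delicate part, and what I expect to be the main obstacle, is (vii). In three dimensions the nonlinearity is $|u|^4 u$ with $2^*-1=5$, which is high enough that basic control in $\mathcal K^q(T)$ does not directly imply time-differentiability of $u$. The extra restriction $\tfrac{1}{2^*}-\tfrac{1}{24}<\tfrac{1}{q}$ is chosen so that after differentiating \eqref{integral-equation} in time,
\[
\partial_t u = \Delta e^{t\Delta}u_0 + |u|^{2^*-2}u + \int_0^t \Delta e^{(t-\tau)\Delta}\bigl(|u|^{2^*-2}u\bigr)\,d\tau,
\]
the nonlinear term can be controlled by H\"older in an exponent range that closes in $\mathcal K^{3,1}(T_m)$. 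The upgrade to $\mathcal K^{2,1}(T_m)$ is then obtained by interpolating this bound against the already-established control of $u$ in $\mathcal K^q(T_m)$. The narrowness of the admissible exponent window here, together with the derivative loss incurred by differentiating in time, is what makes this step the technical heart of the argument.
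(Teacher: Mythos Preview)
The paper does not supply its own proof of this proposition: it is explicitly recalled from Chikami, Ikeda and Taniguchi \cite{MR4331259} (the critical Hardy--Sobolev paper, with $\gamma=0$) and no argument is reproduced here. So there is no proof in the present article to compare your proposal against.

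That said, your outline is the standard Kato--Fujita contraction strategy that underlies this class of results, and it is correct in its broad strokes: the condition \eqref{l:crtHS.nonlin.est:c2} is exactly what makes the Duhamel term a contraction on a small ball in $\mathcal K^q(T)$, persistence in $\dot H^1$ comes from the gradient smoothing estimate, and parts (ii)--(vi) are the routine consequences you describe. Your differentiated Duhamel formula for (vii) is correct, and closing an estimate for $\partial_t u$ in $\mathcal K^{3,1}$ under the sharper exponent restriction, then interpolating to $\mathcal K^{2,1}$, is the natural line of attack. If you want to turn this into a complete proof rather than a sketch, the place requiring the most care is verifying that the Beta-type time integrals that arise in (vii) are finite precisely under the stated condition on $q$, and that the nonlinear term $|u|^4 u$ in $d=3$ genuinely lands in the right Lebesgue space at each step; everything else is bookkeeping.
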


Note that the definition of $\mathcal{K}^q$,  standard estimates of heat semigroup and the Sobolev Embedding Theorem lead to
\begin{displaymath}
 \| e^{t\Delta} u_0 \|_{\mathcal{K}^q} \leq C \Vert u_0 \Vert _{L ^{2^*}} \leq C \Vert u_0 \Vert _{\dot{H}^1},
\end{displaymath}
hence to the existence of a global solution for small enough $\Vert u_0 \Vert _{\dot{H}^1}$.

The next energy identity  is crucial for proving Theorem \ref{thm:global-dynamics}.

\begin{proposition} \cite[Proposition 2.9]{MR4331259}
\label{prop:energy-id}
Let $u_0 \in \dot{H}^1(\RR^d)$ and
$t_0 \in (0,T_m)$. Then, the mild solution $u$ to \eqref{eq:critical-nonlinear-heat} with
$u(0)=u_0$ satisfies the energy identity
\begin{equation}\label{eqn:energy-id}
	E(u(t))+ \int_{t_0}^t\int_{\RR^d}|\partial_t u(\tau,x)|^2\, dx d\tau
		= E(u(t_0))
\end{equation}
for any $t \in [t_0, T_m)$.
Furthermore, the energy inequality
\begin{equation}\label{energy-ineq}
	E(u(t)) \le E(u_0)
\end{equation}
holds for any $t \in [0, T_m)$.
\end{proposition}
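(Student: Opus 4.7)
The plan is to reduce \eqref{eqn:energy-id} to the formal computation
\[
\frac{d}{dt} E(u(t)) = \int_{\RR^d} \bigl( \nabla u \cdot \nabla \partial_t u - |u|^{2^*-2} u\, \partial_t u \bigr) \, dx = - \int_{\RR^d} |\partial_t u|^2 \, dx,
\]
obtained by multiplying \eqref{eq:critical-nonlinear-heat} by $\partial_t u$ and integrating by parts over $\RR^d$. The whole difficulty is to justify this calculation rigorously for a mild solution whose only \emph{a priori} regularity is $u \in C([0,T_m); \dot{H}^1(\RR^d))\cap \mathcal{K}^q(T_m)$.

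Starting from any $t_0 \in (0, T_m)$, I would use the Duhamel formula \eqref{integral-equation} together with the parabolic smoothing estimates for $\{e^{t\Delta}\}_{t>0}$ and the mixed $\mathcal{K}^{\tilde q, \tilde r}$-bounds from Proposition \ref{prop:wellposed1}(i) to upgrade $u$ on $[t_0, T_m)$ to $u \in L^\infty_{\mathrm{loc}}([t_0, T_m); H^2(\RR^d))$ with $\partial_t u \in L^2_{\mathrm{loc}}([t_0, T_m); L^2(\RR^d))$. Once this regularity is in hand, the chain rule becomes legitimate and integration on $[t_0, t]$ yields \eqref{eqn:energy-id}. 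This parabolic-smoothing step is the main obstacle: one has to carefully interweave the nonlinear estimate controlling $|u|^{2^*-2} u$ in a Strichartz-type norm with the gain of derivatives provided by the heat kernel, and then check that the resulting time-integrated bounds on $\partial_t u$ are finite down to $t_0$ (the regularity stated in part (vii) for $d=3$ already reflects this type of gain).

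To obtain the energy inequality \eqref{energy-ineq}, which also involves $t=0$, I would then pass to the limit $t_0 \to 0^+$. Since $u \in C([0,T_m); \dot{H}^1)$, one has $u(t_0) \to u_0$ in $\dot{H}^1$; the gradient term in $E$ is clearly continuous in this norm, while the potential term is continuous by the Sobolev embedding $\dot{H}^1(\RR^d) \hookrightarrow L^{2^*}(\RR^d)$. Hence $E(u(t_0)) \to E(u_0)$, and combining \eqref{eqn:energy-id} with the nonnegativity of the dissipation term on the left gives
\[
E(u(t)) \le E(u(t_0)) \longrightarrow E(u_0),
\]
which is \eqref{energy-ineq}. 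An alternative route, which altogether bypasses the regularity analysis near $t_0$, is to first establish both conclusions for smooth, rapidly decaying initial data (where they follow from a classical direct argument applied to \eqref{eq:critical-nonlinear-heat}) and then transfer them to general $u_0 \in \dot{H}^1(\RR^d)$ by approximation, using the continuous dependence statement in Proposition \ref{prop:wellposed1}(iii) together with Sobolev embedding to pass the $L^{2^*}$ part of the energy to the limit.
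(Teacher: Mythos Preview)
Your proposal is correct and follows essentially the same route as the proof in the cited reference: the paper itself does not prove this proposition but quotes \cite[Proposition 2.9]{MR4331259}, and it records immediately afterward that the proof there establishes $\partial_t u,\ \Delta u \in L^2_{\mathrm{loc}}([t_0,T_m);L^2(\RR^d))$, which is exactly the parabolic-smoothing regularity you propose to obtain via Duhamel and the $\mathcal{K}^{\tilde q,\tilde r}$ bounds; from this the energy identity on $[t_0,t]$ and the passage $t_0\to 0^+$ for \eqref{energy-ineq} proceed as you describe. One small point: the regularity actually obtained (and all that is needed) is $u\in L^2_{\mathrm{loc}}([t_0,T_m);\dot H^2)$ rather than $L^\infty_{\mathrm{loc}}$ in time into $H^2$, but this does not affect your argument.
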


Moreover, as stated in (2.14) in the proof of Proposition 2.9 in Chikami, Ikeda and Taniguchi  \cite{MR4331259},  solutions are such that for any $t_0 \in (0,T_m)$
\begin{displaymath}
\partial_t u,  \Delta u \in L^2 _{loc} \left( [t_0, T_m),  L^2 (\RR^d) \right),
\end{displaymath}
which leads to
\begin{equation}
\label{eqn:nonlinear-term-lo-integrable}
|u|^{\frac{4}{d-2}} u \in L^2 _{loc} \left( [t_0, T_m),  L^2 (\RR^d) \right).
\end{equation}

\subsection{Decay character} \label{DecayCharacter} As stated in the Introduction,  the decay character of an $L^2$ function describes its behaviour near the origin in frequency space,  i.e.   $v_0 \in L^2(\RR^d)$ has decay character $r^{\ast} = r^{\ast} (v_0)$  if $\widehat{v_0} (\xi) \sim |\xi|^{r}$ at $\xi = 0$.  The decay character allows us to obtain upper and lower bounds for the $L^2$ norm of solutions to the heat equation with initial data $v_0$.

We now recall the definition and properties of the decay character,  see Bjorland and M.E. Schonbek \cite{MR2493562},  Niche and M.E. Schonbek \cite{MR3355116},  and Brandolese \cite{MR3493117}.

\begin{definition} \label{decay-indicator}
Let  $v_0 \in L^2(\RR^d)$. For $r \in \left(- \frac{d}{2}, \infty \right)$, we define the {\em decay indicator}  $P_r (v_0)$ corresponding to $v_0$ as
\begin{displaymath}
P_r(v_0) = \lim _{\rho \to 0} \rho ^{-2r-d} \int _{B(\rho)} \bigl |\widehat{v_0} (\xi) \bigr|^2 \, d \xi,
\end{displaymath}
provided this limit exists. In the expression above,  $B(\rho)$ denotes the ball at the origin with radius $\rho$.
\end{definition}

\begin{definition} \label{df-decay-character} The {\em decay character of $ v_0$}, denoted by $r^{\ast} = r^{\ast}( v_0)$ is the unique  $r \in \left( -\frac{d}{2}, \infty \right)$ such that $0 < P_r (v_0) < \infty$, provided that this number exists. We set $r^{\ast} = - \frac{d}{2}$, when $P_r (v_0)  = \infty$ for all $r \in \left( - \frac{d}{2}, \infty \right)$  or $r^{\ast} = \infty$, if $P_r (v_0)  = 0$ for all $r \in \left( -\frac{d}{2}, \infty \right)$.
\end{definition}

In some cases, it is possible to explicitly compute the decay character.  See, for example, M.E. Schonbek \cite{MR837929},  Ferreira, Niche and Planas \cite{MR3565380}.

We now state the Theorem that describes decay in terms of the decay character for linear equations
\begin{equation}
\label{eqn:linear-part}
v_t = \Ll v,  \qquad v(x,0) = v_0 \in L^2 \left( {\RR^d} \right),
\end{equation}
where the differential operator $\Ll$ is diagonalizable,  in the sense that the symbol matrix $\widehat{M}$  in $\widehat{\Ll u} = \widehat{M} \widehat{u}$ is diagonalizable,  with $\widehat{M} = - O^t \cdot |\xi|^{2 \alpha} Id _{\RR^d} \cdot O$,  with $O \in O(d)$.

\begin{theorem} \cite[Theorem 2.10]{MR3355116}
\label{characterization-decay-l2}
Let $v_0 \in L^2 (\RR^d)$ have decay character $r^{\ast} (v_0) = r^{\ast}$. Let $v (t)$ be a solution to  \eqref{eqn:linear-part} with initial datum $v_0$. Then if $- \frac{d}{2 } < r^{\ast}< \infty$, there exist constants $C_1, C_2> 0$ such that
\begin{displaymath}
C_1 (1 + t)^{- \frac{1}{\a} \left( \frac{d}{2} + r^{\ast} \right)} \leq \Vert v(t) \Vert _{L^2} ^2 \leq C_2 (1 + t)^{- \frac{1}{\a} \left( \frac{d}{2} + r^{\ast} \right)}.
\end{displaymath}
\end{theorem}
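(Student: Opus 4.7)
The plan is to work directly on the Fourier side, where the diagonalizability hypothesis on $\Ll$ makes the evolution explicit. After the orthogonal change of frequency variables provided by $O$, each component of $\widehat{v}$ satisfies $\partial_t\widehat{w_j}=-|\xi|^{2\a}\widehat{w_j}$, so $\widehat{w_j}(\xi,t)=e^{-|\xi|^{2\a}t}\widehat{w_j}(\xi,0)$, and since $O$ is orthogonal, Plancherel yields
\[
\Vert v(t)\Vert_{L^2}^2 = \int_{\RR^d} e^{-2|\xi|^{2\a}t}\,|\widehat{v_0}(\xi)|^2\,d\xi.
\]
I would then introduce the splitting radius $\rho(t)=(1+t)^{-1/(2\a)}$, at which the exponential kernel transitions from order $1$ to rapid decay, and note that for large $t$ the ball $B(\rho(t))$ lies deep in the regime where Definition \ref{df-decay-character} is effective.

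For the lower bound I restrict the integral to $B(\rho(t))$, where $e^{-2|\xi|^{2\a}t}\ge e^{-2}$, and invoke the decay character: for $t$ sufficiently large,
\[
\int_{B(\rho(t))}|\widehat{v_0}(\xi)|^2\,d\xi \ge \tfrac12\, P_{r^{\ast}}(v_0)\,\rho(t)^{2r^{\ast}+d},
\]
which produces the claimed $(1+t)^{-(d/2+r^{\ast})/\a}$ lower bound for large $t$. For small $t$, $\Vert v(t)\Vert_{L^2}^2$ is uniformly positive by continuity, so $C_1$ can be adjusted to absorb this regime and give the estimate with $(1+t)$ in place of $t$ for every $t\ge 0$.

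For the upper bound I split at $|\xi|=\rho(t)$. Inside $B(\rho(t))$ the trivial bound $e^{-2|\xi|^{2\a}t}\le 1$ combined with the decay character produces a contribution of order $\rho(t)^{2r^{\ast}+d}$. Outside, I perform a dyadic decomposition into annuli $A_k=\{2^k\rho(t)\le|\xi|<2^{k+1}\rho(t)\}$, $k\ge 0$, on which $e^{-2|\xi|^{2\a}t}\le e^{-2\cdot 2^{2\a k}}$, yielding
\[
\int_{|\xi|>\rho(t)}e^{-2|\xi|^{2\a}t}|\widehat{v_0}|^2\,d\xi \le \sum_{k\ge 0}e^{-2\cdot 2^{2\a k}}\int_{B(2^{k+1}\rho(t))}|\widehat{v_0}|^2\,d\xi.
\]
While $2^{k+1}\rho(t)$ remains small, the decay character controls the inner integral by a constant multiple of $(2^{k+1}\rho(t))^{2r^{\ast}+d}$, and the doubly-exponential weights make the series converge to a quantity of order $\rho(t)^{2r^{\ast}+d}$; for the remaining indices the inner integral is bounded only by $\Vert v_0\Vert_{L^2}^2$, but the Gaussian-type weights absorb this tail into a term of smaller order.

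The main obstacle will be the dyadic bookkeeping: one has to split the sum at the threshold $k_0(t)$ where $2^{k_0+1}\rho(t)$ leaves the regime in which $\int_{B(\rho)}|\widehat{v_0}|^2\lesssim \rho^{2r^{\ast}+d}$ is uniformly valid, and verify that both the head, which behaves like a convergent series in $k$ with ratio controlled by $e^{-2\cdot 2^{2\a k}}2^{(k+1)(2r^{\ast}+d)}$, and the tail are individually $O(\rho(t)^{2r^{\ast}+d})$. A secondary point is the case $r^{\ast}<0$, where $\widehat{v_0}$ may be singular at the origin; here positivity and finiteness of $P_{r^{\ast}}(v_0)$ together with $2r^{\ast}+d>0$ ensure that every integral above converges and the same argument proceeds verbatim.
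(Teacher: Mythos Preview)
This theorem is not proved in the present paper; it is quoted from Niche and Schonbek \cite{MR3355116} as a black-box tool in Section~\ref{DecayCharacter}, so there is no in-paper argument to compare your proposal against.

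That said, your sketch is sound. The reduction to $\Vert v(t)\Vert_{L^2}^2=\int_{\RR^d} e^{-2|\xi|^{2\a}t}|\widehat{v_0}(\xi)|^2\,d\xi$ is immediate from the hypothesis on $\widehat{M}$, since conjugation by the orthogonal matrix $O$ preserves the Euclidean norm of $\widehat{v}$ pointwise in $\xi$. The lower bound via restriction to $B(\rho(t))$ and the upper bound via the dyadic annuli both work as you describe. The only point worth making fully explicit in the tail estimate is that at the threshold $k_0(t)$ one has $2^{2\a k_0}\gtrsim (1+t)$, so the first surviving weight is already of order $e^{-c(1+t)}$ and is dominated by any inverse power of $1+t$; this disposes of the tail uniformly. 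Small times are handled exactly as you say, since $(1+t)^{-(d/2+r^\ast)/\a}$ is bounded above and below on any compact interval.
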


 In Definitions \ref{decay-indicator} and \ref{df-decay-character},  the limits are assumed to exist,  thus leading to a positive $P_r (u_0)$.  However,  Brandolese \cite{MR3493117} showed  that these may not necessarily exist for all $v_0 \in L^2 (\RR ^d)$ by constructing initial velocities with strong oscillations near the origin in frequency space for which Definition \ref{decay-indicator} does not hold.  He then introduced more general definitions of upper and lower decay character that,  when equal,  recover the  more restrictive one  in Definition \ref{df-decay-character}.    Moreover,  he showed that the decay character exists if and only if $v_0$ belongs to an explicit subset of a certain critical homogeneous Besov space and proved that solutions to the linear system \eqref{eqn:linear-part}  with such  initial data $v_0$ have sharp algebraic decay,  i.e.
if and only if the decay character $r^{\ast} = r^{\ast} (v_0)$ exists.

\section{Proof of Theorem \ref{thm:global-dynamics}}
\label{proof-global-dynamics}

In this section,  we prove Theorem \ref{thm:global-dynamics}, that is,  we establish a dichotomy for the global behavior in time of the solution to \eqref{eq:critical-nonlinear-heat} whose energy is less than or equal to that of the ground state.

The proof of  Theorem \ref{thm:global-dynamics}  \eqref{enumi},  i.e.  dissipativity of solutions,  is carried out in a  similar way to the proof in Chikami, Ikeda and Taniguchi \cite[(a) of Theorem 1.1]{MR4331259},  where the authors  deal with the inhomogeneous energy-critical nonlinearity  $|x|^{-\gamma}|u|^{2^*(\gamma)-1}u$ with $\gamma>0$, where $2^*(\gamma) = \frac{2(d-\gamma)}{d-2}$ is the Hardy-Sobolev critical exponent.  The main modifications involve adapting the arguments from \cite[Lemmas 3.1 and 3.4]{MR4331259}, which rely on the existence of a potential (i.e.  $\gamma>0$).  We omit the proof of Theorem \ref{thm:global-dynamics}  \eqref{enumii},  i.e.  the blowup of solutions whose initial data has larger $\dot{H}^1$ norm than that of the ground state,  because it is an adaptation of the classical argument in Levine \cite{MR348216} (c.f.  Cazenave \cite{cazenave-ENAMA},  Gustafson and Roxanas,  \cite{MR3765769}).

 We introduce now some tools and notation,  which are taken from Chikami, Ikeda and Taniguchi \cite{MR4331259} after setting $\gamma=0$.  The Nehari functional $J:\dot{H}^1(\mathbb{R}^d)\rightarrow \mathbb{R}$ is given by
\[
	J(f) = \frac{d}{d\lambda} E \left( \lambda f \right) \Big|_{\lambda=1}
	= \|f\|_{\dot{H}^1}^2
	- \|f\|_{L^{2^*}}^{2^*}, \ \ \ f\in \dot{H}^1(\mathbb{R}^d).
\]
It is easy to see that if $W$ is a solution to the stationary problem \eqref{eqn:stationary problem}, then $J(W)=0$. Moreover, we have that the ground state $W$ is characterized by the minimization problem
\[
E(W)=\inf \left\{ E(\phi) : \phi\in \dot{H^1}(\mathbb{R}^d)\backslash\{0\},\ J(\phi)=0 \right\}=\frac{1}{d}\|\nabla W\|_{L^2}^2>0.
\]
We note that under the condition $E(\phi)<E(W)$, the inequality $J(\phi)\ge 0$ is equivalent to $\|\nabla \phi\|_{L^2}<\|\nabla W\|_{L^2}$.  First,  assume $\|\nabla \phi\|_{L^2}<\|\nabla W\|_{L^2}$ and suppose that $J(\phi)<0$.  In this case, there exists a $\lambda_0\in (0,1)$ such that $J(\lambda_0\phi)=0$, which implies that $E(W)\le E(\lambda_0 \phi)$. Using the Pohozaev identity we obtain $\|\nabla W\|_{L^2}^2<\|\nabla \phi\|_{L^2}^2$, which is a  contradiction.   Now suppose $J(\phi)\ge 0$.  Then
\begin{align*}
    0  \le J(\phi)
    =\frac{2-2^*}{2}\|\nabla\phi\|_{L^2}^2+ 2^* E(\phi)  < \frac{2-2^*}{2}\|\nabla\phi\|_{L^2}^2 + 2^* E(W) =   \frac{2^*}{d}\left( \|\nabla W \|_{L^2}^2 - \|\nabla\phi\|_{L^2}^2\right ),
\end{align*}
which leads to $\|\nabla \phi\|_{L^2}<\|\nabla W\|_{L^2}$.  Notice that as a result of this inequality  we have that the energy $E$ and the $\dot{H}^1$ norm are equivalent,  as
\begin{equation}
\label{Norm Equivalency}
\Bigl(\frac{1}{2}-\frac{1}{2^*}\Bigr)\|\nabla \phi\|_{L^2}^2\le E(\phi)\le \frac{1}{2}\|\nabla \phi\|_{L^2}^2.
\end{equation}

\begin{proof}[Proof of Theorem \ref{thm:global-dynamics} \eqref{enumi}] It suffices to consider the case $E(u_0)<E(W)$, as the scenario where $E(u_0)=E(W)$ and $J(u_0)\ne 0$ reduces to it.  Indeed, let $u_0 \in \dot H^1(\RR^d)$ with $E(u_0)=E(W)$ and $J(u_0)\ne 0$.  Suppose that there exists a time $t_1\in (0,T_m)$ such that $E(u(t_1)) = E(u_0)$.  Then, by the energy inequality \eqref{energy-ineq}, we have $E(u(t)) = E(u_0)$ for any $t \in [0,t_1]$.  Moreover, for any $t_0 \in (0,t_1)$, the solution $u$ must be stationary in the interval $[t_0,t_1]$  due to the energy identity \eqref{eqn:energy-id}, implying that  $J(u(t)) = 0$ for any $t\in [t_0,t_1]$.  This,  however, contradicts  $J(u_0)\ne 0$ and the continuity of $J(u(t))$ in $t\in [0,t_1]$.  Therefore,  we have $E(u(t)) < E(u_0) = E(W)$ for all $t\in (0,T_m)$.

Let us define a stable set $\mathcal{M}^+$ and an unstable set $\mathcal{M}^-$ in the energy space $\dot{H}^1(\RR^d)$ as
\begin{equation}\nonumber\begin{aligned}
	&\mathcal M^+ = \left\{ \phi \in \dot{H}^1(\mathbb R^d) \ ;\,  E(\phi) < E(W), J(\phi) \ge0 \right\},\\
	&\mathcal M^- = \left\{ \phi \in \dot{H}^1(\mathbb R^d) \ ;\,  E(\phi) < E(W), J(\phi) <0 \right\}.
\end{aligned}\end{equation}
 For $E\in \mathbb{R}$, we introduce a subset $\mathcal{M}_E^+$ of $\dot{H}^1(\mathbb{R}^d)$ defined by
\[
	\mathcal M_E^+ := \left\{\phi \in \dot{H}^1(\mathbb R^d) \,;\,
	E(\phi) < E, J(\phi)\ge0\right\}.
\]
Note that $\mathcal M_E^+$ is not empty when $E>0$, as can be  seen from \eqref{Norm Equivalency}. We also introduce a critical energy $E^c$ given by
\[E^c = \sup\big\{ E \in \RR \,;\, \ \text{$T_m(u_0)=+\infty$ and $\|u\|_{\mathcal K^q}<\infty$} \text{ for any solution $u$ to \eqref{eq:critical-nonlinear-heat} with $u_0\in \mathcal M_E^+$}
\big\},
\]
where $q$ satisfies \eqref{l:crtHS.nonlin.est:c2}.  We note  that when $E < E^c$ all solutions to \eqref{eq:critical-nonlinear-heat} with initial data in $\mathcal M_E^+$ are dissipative, according to (vi) in Proposition \ref{prop:wellposed1}.  Then Theorem \ref{thm:global-dynamics}  \eqref{enumi} is equivalent to $E^c \ge E(W)$.  To establish this, we will proceed  by contradiction.  With this in mind, we assume that
\begin{equation}\label{assum}
	E^c < E(W).
\end{equation}
Our goal is to deduce that  $E^c=0$,  which leads to a contradiction,  as $E^c>0$  is guaranteed by the small-data global existence result in Proposition \ref{prop:wellposed1}.

We begin now the proof of $E^c=0$.  Let  $\{\phi_n\}_{n=1}^\infty \subset \mathcal M^+$ be a sequence  that approaches $E^c$ from above,  this is
\begin{equation}\label{cri-ele}
	\text{$E(\phi_n) \searrow E^c$ as $n\to \infty$}
	\quad \text{and}\quad
	\text{$\|u_n\|_{\mathcal K^q(T_{m}(\phi_n))}=\infty$ for $n=1,2,\cdots$},
\end{equation}
where $u_n$ is a solution to \eqref{eq:critical-nonlinear-heat} with $u_n(0)=\phi_n$.  The following Lemma,  whose proof will be given after this,   provides a  key decomposition of $\{\phi_n\}_{n=1}^\infty$ in terms of a  single profile $\psi$ and  is based on  the linear profile decomposition in  Proposition \ref{prop:profile}.

\begin{lemma}\label{lem:singleprofile}
Suppose (\ref{assum}) holds.  Let $\{\phi_n\}_{n=1}^\infty$ be the above sequence. Then $\phi_n$ can be decomposed as
\begin{equation}\label{decom:J=1}
\phi_n = \psi_n  + w_n,\quad \psi_n (x) =\frac{1}{(\lambda_n)^{\frac{d-2}{2}}} \, \psi\left( \frac{x-x_n}{\lambda_n}\right)
\end{equation}
with translation parameters $\{x_n\}_{n=1}^{\infty}\subset \mathbb{R}^d$ and scale parameters $\{\lambda_n\}_{n=1}^\infty \subset (0,\infty)$, where $\psi_n, w_n \in \mathcal M^+$ and
\begin{equation}\label{w_n}
\lim_{n\to \infty} E(w_n) = \lim_{n\to \infty} \|w_n\|_{\dot{H}^1} = 0.
\end{equation}
\end{lemma}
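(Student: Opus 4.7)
The plan is to carry out a Kenig–Merle style critical element extraction adapted to the parabolic framework, closely following Chikami, Ikeda and Taniguchi \cite{MR4331259}. First, I would apply the linear profile decomposition of Proposition \ref{prop:profile} to the bounded sequence $\{\phi_n\}\subset \dot H^1(\RR^d)$ to obtain, for every $J\geq 1$,
\begin{equation*}
\phi_n=\sum_{j=1}^J \psi_n^j+w_n^J,\qquad \psi_n^j(x)=(\lambda_n^j)^{-\frac{d-2}{2}}\psi^j\!\left(\frac{x-x_n^j}{\lambda_n^j}\right),
\end{equation*}
with parameters $(\lambda_n^j,x_n^j)$ pairwise asymptotically orthogonal and a remainder whose linear evolution satisfies $\lim_{J\to\infty}\limsup_{n\to\infty}\|e^{t\Delta}w_n^J\|_{\mathcal K^q}=0$. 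Pairwise orthogonality delivers the asymptotic Pythagorean identities for the $\dot H^1$ and $L^{2^*}$ norms, hence for the energy and the Nehari functional: for each fixed $J$,
\begin{equation*}
E(\phi_n)=\sum_{j=1}^J E(\psi^j)+E(w_n^J)+o_n(1),\qquad J(\phi_n)=\sum_{j=1}^J J(\psi^j)+J(w_n^J)+o_n(1).
\end{equation*}

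Next, I would transfer the constraint $\phi_n\in\mathcal M^+$ and $E(\phi_n)\searrow E^c<E(W)$ to the summands. Using the characterization $\mathcal M^+=\{\|\nabla\phi\|_{L^2}<\|\nabla W\|_{L^2},\,E(\phi)<E(W)\}$ together with \eqref{Norm Equivalency}, each nontrivial $\psi^j$ (and each $w_n^J$ for $n,J$ large) must lie in $\mathcal M^+$, so all of $E(\psi^j)$ and $E(w_n^J)$ are nonnegative and in particular $E(\psi^j)\le E^c$ for every $j$. The heart of the argument is then a proof by contradiction: if two or more of the $\psi^j$ had nontrivial $\dot H^1$-norm, each would strictly satisfy $E(\psi^j)<E^c$, so by the definition of $E^c$ and Proposition \ref{prop:wellposed1}(vi) each associated nonlinear profile $U^j$—the solution of \eqref{eq:critical-nonlinear-heat} with datum $\psi^j$—would be global with $\|U^j\|_{\mathcal K^q}<\infty$. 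Assembling $\sum_{j=1}^J U_n^j+e^{t\Delta}w_n^J$ into an approximate solution, using asymptotic orthogonality of the $(\lambda_n^j,x_n^j)$ to annihilate cross-interaction errors in $\mathcal K^q$, and invoking a long-time parabolic perturbation (stability) lemma in the spirit of \cite{MR4331259} would then yield $\|u_n\|_{\mathcal K^q(T_m(\phi_n))}<\infty$ uniformly in $n$, contradicting \eqref{cri-ele}.

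Consequently exactly one profile, call it $\psi=\psi^1$, is nontrivial; moreover $E(\psi^1)\ge E^c$, since otherwise the same perturbation argument applied to the single global profile $U^1$ would again contradict \eqref{cri-ele}. Combined with $E(\psi^1)\le E^c$ this forces $E(\psi^1)=E^c$. Setting $\psi_n=\psi_n^1$, $(\lambda_n,x_n)=(\lambda_n^1,x_n^1)$, and $w_n=\phi_n-\psi_n$, the energy Pythagorean identity gives $E(w_n)=E(\phi_n)-E(\psi^1)+o_n(1)\to 0$, and the membership $w_n\in\mathcal M^+$ for $n$ large together with \eqref{Norm Equivalency} upgrades this to $\|w_n\|_{\dot H^1}\to 0$. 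The inclusion $\psi_n\in\mathcal M^+$ follows from scale invariance of $E$ and $J$ together with the constraint transfer above. The main obstacle is the nonlinear profile/perturbation step: I must construct a global-in-time approximate solution that stays uniformly close to $u_n$ in the critical norm $\mathcal K^q$ despite the scales $\lambda_n^j$ potentially spanning very different orders of magnitude, and I need a parabolic stability lemma robust enough to propagate critical smallness over arbitrarily long time intervals. The translation invariance of \eqref{eq:critical-nonlinear-heat} simplifies this step relative to the Hardy–Sobolev setting of \cite{MR4331259}, since no potential term obstructs the concentration analysis, but the global-in-time aspect of the perturbation lemma remains the delicate point.
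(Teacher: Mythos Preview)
Your proposal is correct and follows essentially the same approach as the paper: profile decomposition, transfer of the $\mathcal M^+$ constraint to the profiles via the energy/Nehari decoupling, construction of nonlinear profiles, and the perturbation argument (Proposition~\ref{prop:perturbation} together with the cross-interaction estimate of Lemma~\ref{lem:key2}) to derive a contradiction with \eqref{cri-ele}. The only difference is organizational: the paper first proves that at least one ``bad'' profile (one with $\|v^j\|_{\mathcal K^q}=\infty$) exists and then that at most one exists, whereas you argue directly that at most one \emph{nontrivial} profile can occur and then that it must satisfy $E(\psi)=E^c$; these are equivalent repackagings of the same contradiction.
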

Let $v^c=v^c(t,x)$ be a solution to \eqref{eq:critical-nonlinear-heat} with $v^c(0) = \psi$, where $\psi$ is given in the decomposition \eqref{decom:J=1}. From \eqref{w_n} and the energy inequality \eqref{energy-ineq}, it follows that
\[
E^c = \lim_{n\to \infty}E(\phi_n) = \lim_{n\to \infty} E(\psi_n)
	= E(\psi) \ge E(v^c(t)),
	\quad t \in [0,T_{m}(\psi)).
\]
Next, we will show that there exists $t_0 \in (0, T_m(\psi))$ such that
\begin{equation}\label{E^c-ineq}
	E^c \le E(v^c(t_0)).
\end{equation}
Suppose  that $E^c > E(v^c(t))$ for any $t \in (0,T_{m}(\psi))$. By the definition of $E^c$ we have that  $T_m(\psi)=+\infty$ and $\|v^c\|_{\mathcal K^q}<\infty$.  Let
\[
v_n^c (t,x) = \frac{1}{(\lambda_n)^{\frac{d-2}{2}}} \, v^c\left(\frac{t}{(\lambda_n)^{2}}, \frac{x-x_n}{\lambda_n}\right).
\]
Each  $v_n^c$ is a solution to \eqref{eq:critical-nonlinear-heat} with $v_n^c(0) = \psi_n$ and satisfies $\|v^c_n\|_{\mathcal K^q}+\|v^c_n\|_{\mathcal K^{\tilde{q}}_{\tilde{r}}}<\infty$ for any $n\in\mathbb N$, where $(\tilde{q},\tilde{r})$ fulfills all the conditions in Proposition \ref{prop:perturbation}.  We apply this  perturbation result with $e=0$  and using \eqref{w_n} we obtain  $T_{m}(\phi_n)=+\infty$ and  $\|u_n\|_{\mathcal K^q}<\infty$
for large enough $n$. This contradicts \eqref{cri-ele}. Hence, there exists a time $t_0 \in (0, T_m(\psi))$
 where  inequality \eqref{E^c-ineq} is satisfied.

In summary, for some $t_0\in (0,T_m(\psi))$
\[
	E^c = E(v^c(t_0)).
\]
By the energy identity \eqref{eqn:energy-id}  we have that $v^c$ is a stationary solution.  However, according to \eqref{assum}, $v^c$
  must be the zero solution because $E^c<E(W)$ and $W$ is the ground state.  This establishes that $E^c = 0$,  which means that  \eqref{assum} cannot hold,  thus proving Theorem \ref{thm:global-dynamics} (\ref{enumi}).
\end{proof}

It remains to prove Lemma \ref{lem:singleprofile}. Before doing so, we state a preliminary result that was established in Chikami, Ikeda and Taniguchi \cite[Lemma 3.4]{MR4331259} for $ \gamma > 0 $. Its proof for $ \gamma = 0 $ follows a similar approach, and we include it here for the sake of completeness.

\begin{lemma}\label{lem:key2}
Let $u^j$ be a solution to \eqref{eq:critical-nonlinear-heat} with $u^j(0)=u^j_0 \in \dot H^1(\RR^d),  d \geq 3$ and $\|u^j\|_{\mathcal K^q} < \infty$ for $j=1,2$.
Introduce
\begin{equation*}\label{sc-tr}
u_n^j(t,x)
:=
\frac{1}{(\lambda_n^j)^{\frac{d-2}{2}}}
u^j\left( \frac{t}{(\lambda_n^j)^2}, \frac{x-x_n^j}{\lambda_n^j}\right)
\end{equation*}
with  scaling parameters $\{\lambda_n^j\}_{n=1}^{\infty} \subset (0,\infty)$ and $\{x_n^j\}_{n=1}^{\infty} \subset \mathbb R^d$. Moreover, we assume that
the scalings are asymptotically orthogonal, in the sense that
\begin{equation}\label{aym-orth}
\frac{\lambda_n^1}{\lambda_n^2} + \frac{\lambda_n^2}{\lambda_n^1} + \frac{|x_n^2 - x_n^1|^2}{\lambda_n^1\lambda_n^2}
\to + \infty \quad \text{ as } n\to \infty.
\end{equation}
Then,
\begin{equation}\label{eq.key}
\lim_{n\to \infty} \sup_{t\in (0,\infty)}t^{\frac{d(2^*-1)}{2}(\frac{1}{2^*}-\frac{1}{q})}
\big\||u_n^1(t)|^{2^*-2}|u_n^2(t)|\big\|_{L^{\frac{q}{2^*-1}}} = 0
\end{equation}
for $q$ as in Proposition \ref{prop:wellposed1}.
\end{lemma}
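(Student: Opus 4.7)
The plan is to combine a disjoint-support argument for smooth compactly supported data with a density approximation that exploits the scale invariance of the $\mathcal K^q$-norm.

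First I would dispose of the case where $u^1, u^2$ are replaced by $\tilde u^j \in C_c^\infty((0,\infty)\times\mathbb R^d)$ with $\operatorname{supp}(\tilde u^j) \subset [a_j,b_j]\times B(0,R_j)$. A change of variables shows $\operatorname{supp}(\tilde u_n^j) \subset [a_j(\lambda_n^j)^2,\,b_j(\lambda_n^j)^2] \times B(x_n^j,R_j\lambda_n^j)$. The orthogonality condition \eqref{aym-orth} splits into two sub-cases: either one of the scale ratios $\lambda_n^1/\lambda_n^2$ or $\lambda_n^2/\lambda_n^1$ diverges, in which case the time intervals above become disjoint for large $n$; or both ratios remain bounded, forcing $\lambda_n^1\sim\lambda_n^2$, so the third term in \eqref{aym-orth} implies $|x_n^1 - x_n^2|/\lambda_n^1 \to \infty$ and the spatial balls become disjoint. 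In either sub-case $|\tilde u_n^1|^{2^*-2}|\tilde u_n^2| \equiv 0$ for large $n$, so \eqref{eq.key} is trivial.

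For general $u^j$, I would fix $\varepsilon>0$ and pick $\tilde u^j \in C_c^\infty((0,\infty)\times\mathbb R^d)$ with $\|u^j - \tilde u^j\|_{\mathcal K^q} < \varepsilon$. Such an approximation is available because, for PDE solutions with $\|u^j\|_{\mathcal K^q}<\infty$, the weighted quantity $t^{\frac{d}{2}(\frac{1}{2^*}-\frac{1}{q})}\|u^j(t)\|_{L^q}$ tends to $0$ both as $t\to 0^+$ (the positive power of $t$ against the Sobolev-bounded $L^q$-norm of $u^j\in C([0,T_m);\dot H^1)$) and as $t\to\infty$ (by Proposition \ref{prop:wellposed1}(vi)); a cut-off in time and space followed by mollification produces the required $\tilde u^j$. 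Telescoping the difference $|u_n^1|^{2^*-2}|u_n^2|-|\tilde u_n^1|^{2^*-2}|\tilde u_n^2|$, applying Hölder's inequality with exponents $(q/(2^*-2),q)$, and using either $\big||a|^{2^*-2}-|b|^{2^*-2}\big|\le C(|a|+|b|)^{2^*-3}|a-b|$ when $d\le 6$ or the sub-additivity bound $\big||a|^{2^*-2}-|b|^{2^*-2}\big|\le |a-b|^{2^*-2}$ when $d\ge 7$, one obtains a pointwise-in-$t$ estimate of the $L^{q/(2^*-1)}_x$-norm of the difference by a sum of products of $\|u_n^j(t)\|_{L^q}$ and $\|\tilde u_n^j(t)\|_{L^q}$ factors in which at least one difference $u_n^j-\tilde u_n^j$ appears. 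Multiplying by $t^{\frac{d(2^*-1)}{2}(\frac{1}{2^*}-\frac{1}{q})}$, which factorizes as $2^*-1$ copies of $t^{\frac{d}{2}(\frac{1}{2^*}-\frac{1}{q})}$ distributed across the Hölder factors, taking the supremum in $t$, and invoking the scale invariance $\|u_n^j\|_{\mathcal K^q}=\|u^j\|_{\mathcal K^q}$ (and likewise for $\tilde u_n^j$), one gets a bound $\le C\varepsilon$ uniform in $n$. Combining with the compactly supported case and letting $\varepsilon\to 0$ yields \eqref{eq.key}.

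The main obstacle I anticipate is the density step: $\mathcal K^q$ is an $L^\infty_t$-type weighted space in which $C_c^\infty((0,\infty)\times\mathbb R^d)$ is not automatically dense. The rescue is that one is not approximating arbitrary elements of $\mathcal K^q$ but PDE solutions, whose additional time-continuity and endpoint vanishing make the cut-off-and-mollify procedure work. A secondary technicality is carrying out the Hölder-type difference estimate uniformly in $d$, with the regimes $d\le 6$ and $d\ge 7$ corresponding to $2^*-2\ge 1$ and $2^*-2<1$.
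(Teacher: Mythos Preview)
Your approach is essentially the same as the paper's: reduce to compactly supported space-time functions, where asymptotic orthogonality forces disjoint supports, and control the approximation error via the scale-invariance of the $\mathcal K^q$-norm together with H\"older. The paper implements this slightly differently---it first reduces to $u_0^j \in \dot H^1 \cap L^\infty$ via continuous dependence (Proposition~\ref{prop:wellposed1}(iii)) and then truncates the actual solutions by smooth cutoffs $\eta_k(t)\chi_k(x)$, rather than approximating by arbitrary $C_c^\infty$ functions, and it verifies the compactly supported case by an explicit change of variables rather than your cleaner disjoint-support observation---but the strategy and all the essential estimates coincide. One caveat: your justification that $t^{\frac{d}{2}(\frac{1}{2^*}-\frac{1}{q})}\|u^j(t)\|_{L^q}\to 0$ as $t\to 0^+$ is not quite right, since the Sobolev embedding $\dot H^1 \hookrightarrow L^{2^*}$ does not control the $L^q$-norm when $q>2^*$; the vanishing is still true, but the correct argument goes through the smoothing estimate for $e^{t\Delta}$ combined with density of $L^{2^*}\cap L^q$ in $L^{2^*}$ (or, as the paper does, one sidesteps the issue by first passing to $\dot H^1\cap L^\infty$ data via continuous dependence).
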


\begin{proof} We start by noting that 
based on  (iii) in Proposition \ref{prop:wellposed1}, we can assume that $u^j_0 \in \dot H^1(\RR^d) \cap L^\infty(\RR^d)$ for $j=1,2$, without loss of generality.  We introduce the cut-off functions $\chi_k \in C^\infty_0(\mathbb R^d)$  such that $\chi_k(x) \to 1$ as  $k\to\infty$ for each $x\in\mathbb R^d$, and $\eta_k \in C^\infty([0,\infty))$ such that $\eta_k(t) = 1$ for $t\in[0,k]$ and $\eta_k(t)=0$ for $t\in [2k,\infty)$.
In this way, we define   approximating sequences sequences  $\{v_k^j\}_{k=1}^\infty$ and $\{v_{n,k}^j\}_{k=1}^\infty$ of $u^j$ and $u_n^j$, respectively, as
\[
v_k^j(t,x) := \eta_k(t)\chi_k(x)u^j(t,x), \qquad
v_{n,k}^j(t,x) :=\frac{1}{(\lambda_n^j)^{\frac{d-2}{2}}} v_k^j\left( \frac{t}{(\lambda_n^j)^2}, \frac{x-x_n^j}{\lambda_n^j} \right).
\]
With this, to prove \eqref{eq.key} it is enough to show that
\begin{equation}\label{eq,key_k}
\lim_{n\to \infty} \sup_{t\in (0,T)}t^{\frac{d(2^*-1)}{2}(\frac{1}{2^*}-\frac{1}{q})}
\||v_{n,k}^1(t)|^{2^*-2}|v_{n,k}^2(t)|\|_{L^{\frac{q}{2^*-1}}} = 0
\end{equation}
for each $T>0$ and $k\in\mathbb N$. For a more precise argument, see the proof of \cite[Lemmas 3.2 and 3.4]{MR4331259}.

Based on the   asymptotic orthogonality assumption \eqref{aym-orth}, we will examine two cases: either $\lambda_n^1/\lambda_n^2 \to 0$ or $+\infty$, and $|x_n^1 - x_n^2|^2/(\lambda_n^1\lambda_n^2)\to + \infty$.

We first deal with  the case $\lambda_n^1/\lambda_n^2 \to 0$. We perform the change of variables: $s=t/(\lambda_n^1)^2$ and $y=x/\lambda_n^1$ to have
\[
\begin{split}
&
\sup_{t\in (0,T)}t^{\frac{d(2^*-1)}{2}(\frac{1}{2^*}-\frac{1}{q})}
\||v_{n,k}^1(t)|^{2^*-2}|v_{n,k}^2(t)|\|_{L^{\frac{q}{2^*-1}}}\\
&
= \left(\frac{\lambda_n^1}{\lambda_n^2}\right)^{\frac{d-2}{2}}
\sup_{s\in (0, \frac{T}{(\lambda_n^1)^2})}
s^{\frac{d(2^*-1)}{2}(\frac{1}{2^*}-\frac{1}{q})}\Bigg(\int_{\mathbb{R}^d}
\bigg(
\bigg|
v_{k}^1\bigg(s, y - \frac{x_n^1}{\lambda_n^1}
\bigg)\bigg|^{2^*-2}\\
& \qquad\qquad\qquad\qquad\qquad\qquad \times
\bigg|
v_{k}^2
\bigg(
\bigg(
\frac{\lambda_n^1}{\lambda_n^2}
\bigg)^2
s, \frac{\lambda_n^1}{\lambda_n^2}y - \frac{x_n^2}{\lambda_n^2}
\bigg)
\bigg|
\bigg)^{\frac{q}{2^*-1}}\,
dy
\Bigg)^{\frac{2^*-1}{q}},
\end{split}
\]
which goes to zero as $\lambda_n^1/\lambda_n^2 \to 0$ because 
the 
functions $v_{k}^j $  have compact support. 
If $\lambda_n^1/\lambda_n^2 \to +\infty$,
 we introduce the change of variables $ s = t / (\lambda_n^2)^2 $ and $ y = x / \lambda_n^2 $. Under this rescaling, we can proceed by applying the same argument as above in the transformed variables and conclude that \eqref{eq,key_k} holds.

Finally, we address the case where $ |x_n^1 - x_n^2|^2 / (\lambda_n^1 \lambda_n^2) \to +\infty $, which implies that either $ |x_n^1 - x_n^2| / \lambda_n^1 \to +\infty $ or $ |x_n^1 - x_n^2| / \lambda_n^2 \to +\infty $ as $n \to \infty $. 
Without loss of generality, we focus on situation where $ \sup_n (\lambda_n^1 / \lambda_n^2) \in (0, \infty) $ and $ |x_n^1 - x_n^2| / \lambda_n^2 \to +\infty $ as $ n \to \infty $, since the other cases can be treated analogously.

By performing  the change of variable $s=t/(\lambda_n^1)^2$ and  $y=(x-x_n^1)/\lambda_n^1$, there follows
\[
\begin{split}
&
\sup_{t\in (0,T)}t^{\frac{d(2^*-1)}{2}(\frac{1}{2^*}-\frac{1}{q})}
\||v_{n,k}^1(t)|^{2^*-2}|v_{n,k}^2(t)|\|_{L^{\frac{q}{2^*-1}}}\\
&
= \left(\frac{\lambda_n^1}{\lambda_n^2}\right)^{\frac{d-2}{2}}
\sup_{s\in (0, \frac{T}{(\lambda_n^1)^2})}
s^{\frac{d(2^*-1)}{2}(\frac{1}{2^*}-\frac{1}{q})}\Bigg(\int_{\RR^d}
\bigg(
|
v_{k}^1(s, y)|^{2^*-2}\\
& \qquad\qquad\qquad\qquad\qquad\qquad \times
\bigg|
v_{k}^2
\bigg(
\bigg(
\frac{\lambda_n^1}{\lambda_n^2}
\bigg)^2
s, \frac{\lambda_n^1}{\lambda_n^2}y + \frac{x_n^1-x_n^2}{\lambda_n^2}
\bigg)
\bigg|
\bigg)^{\frac{q}{2^*-1}}\,
dy
\Bigg)^{\frac{2^*-1}{q}}.
\end{split}
\]
Since  $v^2_k$ has compact support, the integrand vanishes for sufficiently large $n $. This confirms \eqref{eq,key_k} in all cases, thereby completing the proof of Lemma \ref{lem:key2}.
\end{proof}

We are finally in a position to prove Lemma \ref{lem:singleprofile}.

\begin{proof}[Proof of Lemma \ref{lem:singleprofile}]
The sequence $\{\phi_n\}$ is bounded in $\dot{H}^1(\RR^d)$,  so as a consequence of the  profile decomposition result Proposition \ref{prop:profile},  there exist $J^* \in \{1,2,\ldots,\infty\}$, $\{\psi^j\}_{j=1}^{J^*} \subset \dot{H}^1(\mathbb R^d)$, $\{\lambda_n^j\}_{j=1}^{J^*} \subset (0,\infty)$, and $\{x_n^j\}_{j=1}^{J^*} \subset \mathbb R^d$ such that for $1\le J\le J^*$
\[
	\phi_n(x) = \sum_{j=1}^{J} \, \psi_n^j(x) + w_n^J(x),
\]
and \eqref{profile_1}--\eqref{profile_6} hold, where $\psi_n^j$ is defined by
\begin{equation*}
\psi_n^j(x)
:=
\frac{1}{(\lambda_n^j)^{\frac{d-2}{2}}} \, \psi^j\left( \frac{x-x_n^j}{\lambda_n^j}\right).
\end{equation*}
We will focus on the case where $J^*=\infty$, since the situation where $J^*<\infty$ can be handled in a similar way.
For large $ n$,  from the asymptotic decoupling property \eqref{profile_5} we obtain
\begin{equation*}
	\sum_{j=1}^\infty\| \psi^{j} \|_{\dot{H}^1}
	\le  \|\phi_n\|_{\dot{H}^1} + o_n(1)
	\le C(1 + E^c),
\end{equation*}
from which it follows that $\| \psi^{j} \|_{\dot{H}^1} \to 0$ as $j\to \infty$.  Next,  by using the decoupling properties \eqref{decop-E-phi_n}, \eqref{decop-J-phi_n}, assumption \eqref{assum} and  Lemma \ref{lem:EJ},  we can deduce that
\begin{equation*}
	\psi^j, w_n^J \in \mathcal M^+
	\quad \text{for any $1\le j\le J$ and $n\in\mathbb N$}.
\end{equation*}
We introduce the nonlinear profiles $v^j$  as a solution to \eqref{eq:critical-nonlinear-heat} with initial data $v^j(0)=\psi^j$.  For each $j,n \geq 1$ let
\[
v_n^j (t,x) =
\frac{1}{(\lambda_n^j)^{\frac{d-2}{2}}} v^j\left(\frac{t}{(\lambda_n^j)^{2}}, \frac{x-x_n^j}{\lambda_n^j}\right),
\]
which solves \eqref{eq:critical-nonlinear-heat} with initial data $v^j_n(0)=\psi^j_n$.  Since $\| \psi^{j} \|_{\dot{H}^1} \to 0$ as $j\to \infty$, by the small-data existence result in Proposition \ref{prop:wellposed1},   there exists $J' \in \mathbb N$ such that
\begin{equation}\label{sdge}
\| v^j \|_{\mathcal K^q} \le 2\| \psi^{j} \|_{\dot{H}^1} \quad \text{for any $j \ge J'$.}
\end{equation}

We first  show the existence of at least one bad profile,  i.e., that there exists  $j_0 \in [1,\cdots,J'-1]$ such that
\begin{equation}\label{step2}
\|v^j\|_{\mathcal K^q(T_{m}(\psi^{j_0}))} = \infty.
\end{equation}
To obtain  a contradiction, we assume that for all  $1 \le j \le J'-1$
\begin{equation}\label{contradiction}
\|v^j\|_{\mathcal K^q(T_{m}(\psi^j))} < \infty.
\end{equation}
The well-posedness result in Proposition \ref{prop:wellposed1} implies $T_{m}(\psi^j)= + \infty$ and
\begin{equation*}
\sum_{j=1}^{J'-1}\|v_n^j\|_{\mathcal K^q} =\sum_{j=1}^{J'-1} \|v^j\|_{\mathcal K^q} < \infty.
\end{equation*}
Our aim is to show that  $\|u_n\|_{\mathcal K^q}<\infty$ for a sufficiently large $n$. To this end,  we will apply Lemma \ref{prop:perturbation}. Thereby, we introduce an approximate solution $u_n^J$ by
\begin{equation*}
u_n^J(t) = \sum_{j=1}^{J} v^j_n(t) + e^{t\Delta}w_n^J,
\end{equation*}
So, from \eqref{sdge} and \eqref{profile_1}, we have
\[
\lim_{J\to\infty}\lim_{n\to\infty}\|u_n^J\|_{\mathcal K^q} <\infty.
\]
Moreover, from Proposition \ref{prop:wellposed1} (v) we have
\[
\|v_n^j\|_{\mathcal K^{\tilde{q}}_{\tilde{r}} } = \|v^j\|_{\mathcal K^{\tilde{q}}_{\tilde{r}}} < \infty
\]
and, by using Marcinkiewicz interpolation, we can see that $\| e^{t\Delta}w_n^J\|_{\mathcal K^{\tilde{q}}_{\tilde{r}} }< \infty $, from which we deduce that
\[\|u_n^J\|_{\mathcal K^{\tilde{q}}_{\tilde{r}} } < \infty
\]
for any $n \in \mathbb N$ and for any $(\tilde{q},\tilde{r})$ fulfilling the conditions in Proposition \ref{prop:wellposed1}.  Now, $u_n^J$ is a solution to the approximate equation
\[
\begin{cases}
\partial_t u_n^J - \Delta u_n^J = |u^J_n|^{2^*-2}u^J_n + e_n^J,\\
u_n^J(0) = \phi_n,
\end{cases}
\]
where
\[
e_n^J :=
\sum_{j=1}^J |v^j_n|^{2^*-2}v^j_n - |u^J_n|^{2^*-2}u^J_n .
\]
To apply the perturbation result Lemma \ref{prop:perturbation}, we need to show that
\begin{equation}\label{error-e_n^J}
\lim_{J\to\infty}\limsup_{n\to \infty}\left\|\int_0^t e^{(t-\tau)\Delta}(e_n^J(\tau))\,d\tau\right\|_{\mathcal K^q} = 0.
\end{equation}
We write $e_n^J = e_{n,1}^J  + e_{n,2}^J$,
where $e_{n,1}^J$ and $e_{n,2}^J$ are given by
\[
e_{n,1}^J =  \sum_{j=1}^J |v^j_n|^{2^*-2}v^j_n -  \bigg|\sum_{j=1}^{J} v^j_n\bigg|^{2^*-2}\bigg( \sum_{j=1}^{J}v^j_n\bigg),
\]
\[
e_{n,2}^J =  |u^J_n - e^{t\Delta}w_n^J|^{2^*-2}( u^J_n - e^{t\Delta}w_n^J) - |u^J_n|^{2^*-2}u^J_n.
\]
By using the pointwise estimate
\[
|e_{n,1}^{J}|
\le C
\sum_{\substack{ 1\le i,j \le J \\ i\not = j}}|v_n^i|^{2^*-2}|v_n^j|,
\]
it follows from Lemma \ref{lem:key2} that
\[
\lim_{n\to \infty} \sup_{t\in (0,\infty)}t^{\frac{d(2^*-1)}{2}(\frac{1}{2^*}-\frac{1}{q})}
\||v_n^i(t)|^{2^*-2}|v_n^j(t)|\|_{L^{\frac{q}{2^*-1}}} = 0
\]
for any $1\le i,j \le J$ with $i\ne j$.
As a result, we arrive at
\begin{equation}\label{error-e_n^J_1}
\limsup_{n\to \infty}\left\|\int_0^t e^{(t-\tau)\Delta}(e_{n,1}^J(\tau))\,d\tau\right\|_{\mathcal K^q} = 0,
\end{equation}
for each $J\ge1$.  For the term $e_{n,2}^{J}$, we can bound
\[
\begin{split}
&\left\|\int_0^t e^{(t-\tau)\Delta}(e_{n,2}^J(\tau))\,d\tau\right\|_{\mathcal K^q}\\
& \le C \big(\|u_n^J\|_{\mathcal K^q}^{2^*-2}\|e^{t\Delta}w_n^J\|_{\mathcal K^q}
+ \|e^{t\Delta}w_n^J\|_{\mathcal K^q}^{2^*-2}
\|u_n^J\|_{\mathcal K^q} + \|e^{t\Delta}w_n^J\|_{\mathcal K^q}^{2^*-1}\big),
\end{split}
\]
where the constant $C$ is independent of $J$.  Consequently, by \eqref{profile_1}, we derive
\begin{equation}\label{error-e_n^J_2}
\lim_{J\to\infty}\limsup_{n\to \infty}\left\|\int_0^t e^{(t-\tau)\Delta}(e_{n,2}^J(\tau))\,d\tau\right\|_{\mathcal K^q} = 0.
\end{equation}
Then,  from \eqref{error-e_n^J_1}--\eqref{error-e_n^J_2} we obtain \eqref{error-e_n^J}.
Therefore, we can apply Proposition \ref{prop:perturbation} to $u_n$ and $u_n^J$, leading to  $\|u_n\|_{\mathcal K^q}<\infty$ for a sufficiently large $n$.
This contradicts \eqref{cri-ele}. Consequently, \eqref{contradiction} is negated, and there exists $j_0 \in [1,\cdots,J'-1]$ such that \eqref{step2} is satisfied.

We now show there is only one profile $\psi^{j_0}$ that leads to \eqref{step2}. Without loss of generality we assume that there are two such profiles $\psi^{j_{0,1}}, \psi^{j_{0,2}}$.   Since $\psi^j, w_n^J\in \mathcal M^+$, we have that $E(\psi^j)\ge 0$ and $E(w_n^J)\ge0$.  Moreover, by \eqref{decop-E-phi_n},  we have that for any $J\ge1$
\begin{equation}\label{decop-E-phi_n2}
E^c = \lim_{n\to\infty} E(\phi_n) = \sum_{j=1}^J E(\psi^j) + \lim_{n\to\infty} E(w_n^J).
\end{equation}
Then, $E^c \geq E \left( \psi^{j_{0,1}} \right) + E \left( \psi^{j_{0,2}} \right)$.  By \eqref{step2} and the definition of $E^c$,  we get  $E^c \le E(\psi^{j_{0,i}})$,  for $i = 1,2$.  These two inequalities lead to $E(\psi^{j_{0,i}}) \leq 0$,  which implies $E(\psi^{j_{0,i}}) = 0$.  But this means that $\psi^{j_{0,i}} =0$,  so $v^{j_{0,i}} =0$,  for $i = 1,2$, which contradicts \eqref{step2}.  Hence,  there is a single such profile $\psi^{j_0}$,    for which
\[
E^c = E(\psi^{j_0}).
\]
This and \eqref{decop-E-phi_n2} result in  $E(\psi^j) = 0$ for any $j \not = j_0$ and
\[
\lim_{n\to \infty} E(w_n^J) = 0\quad \text{for any $J\ge1$}.
\]
Also, as $\psi^j , w_n^J\in \mathcal M^+$ imply that
\[
\|\psi^j\|_{\dot{H}^1} \le C E(\psi^j),\quad \|w_n^J\|_{\dot{H}^1} \le C E(w_n^J),
\]
 we conclude that $\psi^j = 0$ for any $j \not = j_0$ and
\[
\displaystyle \lim_{n\to \infty} \|w_n^J\|_{\dot{H}^1} = 0\quad \text{for any $J\ge1$}.
\]
This finishes the proof of Lemma \ref{lem:singleprofile}.
\end{proof}

 \section{Proof of Theorem \ref{thm:decay}}
\label{proof-decay}

 The main technique to prove Theorem \ref{thm:decay} is the Fourier splitting method,  developed by Mar\'{\i}a E. Schonbek \cite{MR571048},  \cite{MR775190},  \cite{MR837929}. This approach involves deriving a differential inequality for the
$\dot{H} ^1$ norm, where the right-hand side represents the average of the solution $u$  within a small, time-dependent shrinking ball centered at the origin in frequency space.

 The following result is key in the proof of the decay rates in  Theorem \ref{thm:decay}.

\begin{proposition}
\label{lyapunov}
The critical norm $\Vert \cdot \Vert _{\dot{H} ^1}$ is a Lyapunov function for dissipative solutions to \eqref{eq:critical-nonlinear-heat}, for large enough $t$.
\end{proposition}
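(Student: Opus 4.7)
The plan is to compute $\frac{d}{dt}\|u(t)\|_{\dot H^1}^2$ directly from the PDE and then exhibit it as a manifestly non-positive quantity once $t$ is large enough (so that $\|u(t)\|_{\dot H^1}$ is small, by dissipativity). Concretely, for $t \in [t_0, T_m) = [t_0, \infty)$ with $t_0 > 0$, the regularity information recorded in \eqref{eqn:nonlinear-term-lo-integrable} together with the parabolic smoothing for $t>0$ makes the following formal computation rigorous (via standard approximation/mollification in time):
\begin{equation*}
\frac{1}{2}\frac{d}{dt}\|u\|_{\dot H^1}^2
= \int_{\RR^d}\nabla u \cdot \nabla u_t\,dx
= -\int_{\RR^d}\Delta u\,u_t\,dx
= -\int_{\RR^d}\Delta u\,\bigl(\Delta u + |u|^{2^*-2}u\bigr)\,dx.
\end{equation*}

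The next step is to integrate by parts the nonlinear piece. Using $\nabla(|u|^{2^*-2}u)=(2^*-1)|u|^{2^*-2}\nabla u$, I get
\begin{equation*}
-\int_{\RR^d}\Delta u\,|u|^{2^*-2}u\,dx = (2^*-1)\int_{\RR^d}|u|^{2^*-2}|\nabla u|^2\,dx,
\end{equation*}
so that
\begin{equation*}
\frac{1}{2}\frac{d}{dt}\|u\|_{\dot H^1}^2
= -\|\Delta u\|_{L^2}^2 + (2^*-1)\int_{\RR^d}|u|^{\frac{4}{d-2}}|\nabla u|^2\,dx.
\end{equation*}
I then control the nonlinear term by H\"older with exponents $\frac{2^*}{2^*-2}=\frac{d}{2}$ and $\frac{2^*}{2}$, giving
\begin{equation*}
\int_{\RR^d}|u|^{\frac{4}{d-2}}|\nabla u|^2\,dx
\le \|u\|_{L^{2^*}}^{\frac{4}{d-2}}\,\|\nabla u\|_{L^{2^*}}^2,
\end{equation*}
and apply Sobolev embedding $\dot H^1\hookrightarrow L^{2^*}$ to both factors (once to $u$, once to $\nabla u$) to obtain
\begin{equation*}
\int_{\RR^d}|u|^{\frac{4}{d-2}}|\nabla u|^2\,dx \le C\,\|\nabla u\|_{L^2}^{\frac{4}{d-2}}\,\|\Delta u\|_{L^2}^2.
\end{equation*}

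Combining these yields
\begin{equation*}
\frac{1}{2}\frac{d}{dt}\|u(t)\|_{\dot H^1}^2
\le -\Bigl(1 - C(2^*-1)\|\nabla u(t)\|_{L^2}^{\frac{4}{d-2}}\Bigr)\|\Delta u(t)\|_{L^2}^2.
\end{equation*}
Since $u$ is dissipative, Theorem \ref{thm:global-dynamics}\eqref{enumi} gives $\|\nabla u(t)\|_{L^2}\to 0$ as $t\to\infty$, so there exists $t_*>0$ such that $C(2^*-1)\|\nabla u(t)\|_{L^2}^{4/(d-2)}\le 1/2$ for all $t\ge t_*$. Hence $\frac{d}{dt}\|u(t)\|_{\dot H^1}^2 \le -\|\Delta u(t)\|_{L^2}^2 \le 0$ for $t\ge t_*$, which is exactly the Lyapunov property claimed.

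The main obstacle I anticipate is the rigorous justification of the time differentiation and the integration by parts for a merely mild solution, since a priori $u$ is only known to lie in $C([0,T_m);\dot H^1)$; however, the local $L^2$-in-time regularity of $\partial_t u$ and $\Delta u$ away from $t=0$ recorded in \eqref{eqn:nonlinear-term-lo-integrable}, combined with the Sobolev bound just derived ensuring the nonlinear term is well-defined in $L^1_x$, allows the above identity to be obtained by approximating and passing to the limit exactly as in the derivation of the energy identity \eqref{eqn:energy-id} in Proposition \ref{prop:energy-id}.
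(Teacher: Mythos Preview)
Your proof is correct and follows essentially the same approach as the paper's: both compute (the paper in integrated form via Lemma~\ref{lemma-bahouri-chemin-danchin}, you by direct differentiation) the evolution of $\|u\|_{\dot H^1}^2$, bound the nonlinear contribution $\langle \nabla u,\nabla(|u|^{2^*-2}u)\rangle$ by $C\|u\|_{\dot H^1}^{4/(d-2)}\|\nabla u\|_{\dot H^1}^2$ via H\"older and Sobolev, and then use dissipativity to make the coefficient in front of $\|\nabla u\|_{\dot H^1}^2\sim\|\Delta u\|_{L^2}^2$ negative for large $t$. The only cosmetic difference is that you use the exact chain rule $\nabla(|u|^{2^*-2}u)=(2^*-1)|u|^{2^*-2}\nabla u$ before applying H\"older, whereas the paper bounds $|\nabla(|u|^{2^*-2}u)|\le C|u|^{2^*-2}|\nabla u|$ and then applies H\"older with the dual pair $(2^*,(2^*)')$; both routes land on the same estimate.
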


\begin{proof} From \eqref{eqn:nonlinear-term-lo-integrable} and Lemma \ref{lemma-bahouri-chemin-danchin} we deduce that for any $t_1 < t_2$ in a compact interval,  we have the energy equality
\begin{displaymath}
\Vert u(t_2) \Vert ^2 _{\dot{H} ^1}  + 2 \int _{t_1} ^{t_2} \Vert \nabla u(\tau) \Vert ^2  _{\dot{H}^1} \, d\tau = \Vert u (t_1) \Vert ^2 _{\dot{H} ^1} + 2  \int _{t_1} ^{t_2} \langle  u(\tau) , |u(\tau)|^{\frac{4}{d-2}} u(\tau) \rangle _{\dot{H}^1} \, d \tau.
\end{displaymath}
By Parseval's identity, we have
\begin{displaymath}
\langle \Lambda u,\Lambda(|u|^{\frac{4}{d-2}}u)\rangle_{L^2}  =\sum_{j=1}^d\langle \partial_{x_j}u,\partial_{x_j}(|u|^{\frac{4}{d-2}}u)\rangle_{L^2} \leq C\|\nabla u\|_{L^{\frac{2d}{d-2}}} \|\nabla (|u|^{\frac{4}{d-2}}u)\|_{L^\frac{2d}{d+2}} .
\end{displaymath}
By computing
\[\partial_{x_j}(|u|^{\frac{4}{d-2}}u)
=
\frac{4}{d-2}|u|^{\frac{4}{d-2}-1}u\partial_{x_j}u+|u|^{\frac{4}{d-2}}\partial_{x_j}u,
\]
we see that \[ |\partial_{x_j}(|u|^{\frac{4}{d-2}}u)| \leq C |u|^{\frac{4}{d-2}}|\partial_{x_j}u| .\]
By the generalized H\"older inequality and the critical Sobolev embedding
\begin{displaymath}
    \||u|^{\frac{4}{d-2}} \partial_{x_j}u\|_{\frac{2d}{d+2}}  \leq \||u|^{\frac{4}{d-2}}\|_{L^{\frac{d}{2}}}\|\partial_{x_j}u\|_{L^{\frac{2d}{d-2}}} \leq C\|u\|_{L^{\frac{2d}{d-2}}}^{\frac{4}{d-2}}\|\Lambda u\|_{\dot{H}^1}  \leq C\|u\|_{\dot{H}^1}^{\frac{4}{d-2}}\|\Lambda u\|_{\dot{H}^1}.
\end{displaymath}
As a result of this
\begin{equation} \label{eq:estimate-nonlinear}
\langle \Lambda u,\Lambda(|u|^{\frac{4}{d-2}}u)\rangle_{L^2}\leq\|u\|_{\dot{H}^1}^{\frac{4}{d-2}}\|\nabla u\|_{\dot{H}^1}^2,
\end{equation}
and therefore by plugging this estimate in the energy equality
\[
\Vert u(t_2) \Vert ^2 _{\dot{H} ^1}  + 2 \Bigl( 1- C  \, \sup_{t \in[t_1,t_2]} \Vert u(t) \Vert^\frac{4}{d-2} _{\dot{H}^1} \Bigr) \int _{t_1} ^{t_2} \Vert \nabla u(\tau) \Vert ^2  _{\dot{H}^1} \, d\tau \leq \Vert u(t_1) \Vert ^2 _{\dot{H} ^1} .
\]
By Proposition \ref{prop:wellposed1}, we can take $ T >0$ large enough such that $\| u(t) \|_{\dot{H}^1}$ is small for all $t \geq T$.  Hence $ 1 \geq C   \sup_{t > T} \Vert u(t) \Vert^\frac{4}{d-2} _{\dot{H}^1}$. Then for any $ T < t_1 < t_2$ we have
\[
\Vert u(t_2) \Vert ^2 _{\dot{H} ^1} \leq \Vert u(t_1) \Vert ^2 _{\dot{H} ^1} \,,
\]
concluding that the $\dot{H}^1$-norm is a Lyapunov function for large values of $t$.
\end{proof}

\begin{proof}[Proof of Theorem \ref{thm:decay}]
We notice that, from Proposition \ref{lyapunov},   the $\dot{H} ^1$ norm is a nonincreasing function and hence has a derivative a.e. Then
\begin{align*}
\notag \frac{d}{dt} \Vert u(t) \Vert^2 _{\dot{H} ^1} & = 2 \langle \Lambda  u(t), \partial_t \Lambda u(t) \rangle  = 2 \left \langle \Lambda  u (t) ,\Lambda  \left( \Delta u (t)  +  |u|^\frac{4}{d-2} u (t) \right) \right  \rangle \\ \notag &
 = -2 \Vert \nabla u (t) \Vert^2 _{\dot{H} ^1} + 2 \left\langle \Lambda  u (t) , \Lambda \left(  |u|^\frac{4}{d-2} u (t)  \right)  \right\rangle \notag \\ & \leq - 2   \left(1 -  C \Vert  u(t) \Vert^\frac{4}{d-2} _{\dot{H} ^1} \right) \Vert \nabla u(t) \Vert^2 _{\dot{H} ^1}  \leq - \widetilde{C} \Vert \nabla u(t) \Vert^2 _{\dot{H} ^1},
\end{align*}
where we have used \eqref{eq:estimate-nonlinear} and, once again,  Proposition \ref{prop:wellposed1} to have a small enough $\| u (t)\|_{\dot{H}^1}$ for $t$ large enough.

 We start by considering a ball $B(t)$ around the origin in frequency space with  continuous, time-dependent radius $r(t)$ such that
\begin{displaymath}
B(t) = \left\{\xi \in \mathbb{R}^d: |\xi| \leq r(t) = \left( \frac{g'(t)}{\widetilde{C} g(t)} \right) ^{\frac{1}{2}}  \right\},
\end{displaymath}
with $g$ an increasing continuous function such that $g(0) = 1$.   As
\begin{displaymath}
- \widetilde{C} \Vert \nabla u(t) \Vert^2 _{\dot{H} ^1} \leq  - \frac{g'(t)}{g(t)} \int _{B(t) ^c} |\xi| |\widehat{u} (\xi,t) |^2 \, d \xi
\end{displaymath}
we then obtain
\begin{equation}
\label{eqn:key-inequality}
\frac{d}{dt}   \left( g(t)  \Vert u (t) \Vert _{\dot{H} ^1} ^2 \right) \leq g'(t)  \int _{B(t)} ||\xi|  \widehat{u} (\xi, t)| ^2 \, d \xi.
\end{equation}
 We first have
\begin{align}
\int _{B(t)} ||\xi|  \widehat{u} (\xi, t)| ^2 \, d \xi & \leq C \int _{B(t)} \left| e^{- t |\xi| ^2} |\xi| \widehat{u_0} (\xi, t) \right|^2 \, d \xi \notag \\ & + C \int _{B(t)} \left( \int _0 ^t  e^{- (t-s) |\xi| ^2} |\xi| \mathcal{F} \left[|u|^{\frac{4}{d-2}} u \right] (\xi, s) \, ds \right) ^2 \, d \xi. \notag
\end{align}
The first term corresponds to the linear part, i.e.  the heat equation,  hence by Theorem \ref{characterization-decay-l2}, it can be estimated as
\begin{equation}
\label{eq:decay-linear-part}
\int _{B(t)} \left| e^{- t |\xi| ^2} |\xi| \widehat{u_0} (\xi, t) \right|^2 \, d \xi \leq C \Vert e^{t \Delta} \Lambda u_0 \Vert^2 _{L^2} \leq C (1 + t) ^{- \left( \frac{d}{2} + q^{\ast} \right)},
\end{equation}
where $q^{\ast} = r^{\ast} \left( \Lambda u_0 \right)$.  For the non-linear term, we have
\begin{align*}
 \int_{B(t)} & \left(\int_{0}^te^{-(t-s)|\xi|^2}|\xi||\mathcal{F}[|u|^{\frac{4}{d-2}}u]|ds\right)^2d\xi
\le \int_{B(t)}\left(\int_{0}^t|\xi||\mathcal{F}[|u|^{\frac{4}{d-2}}u]|ds\right)^2d\xi \\
&=\int_{B(t)}|\xi|^2\int_{0}^t\int_{0}^t|\mathcal{F}[|u|^{\frac{4}{d-2}}u]||\mathcal{F}[|u|^{\frac{4}{d-2}}u]|dsds'd\xi\\
&\le r(t)^2\int_{0}^t\int_{0}^t\int_{B(t)}|\mathcal{F}[|u|^{\frac{4}{d-2}}u]||\mathcal{F}[|u|^{\frac{4}{d-2}}u]|d\xi dsds'\\
&\le r(t)^2\int_{0}^t\int_{0}^t\left(\int_{B(t)}|\mathcal{F}[|u(s)|^{\frac{4}{d-2}}u(s)]|^2d\xi\right)^{\frac{1}{2}}
\left(\int_{B(t)}|\mathcal{F}[|u(s')|^{\frac{4}{d-2}}u(s')]|^2d\xi\right)^\frac{1}{2}dsds'.
\end{align*}
By using H\"older and Hausdorff–Young inequalities,
\begin{displaymath}
    \int_{B(t)}|\mathcal{F}[|u|^{\frac{4}{d-2}}u]|^2d\xi
     \le \left(\int_{B(t)}d\xi\right)^{\frac{2}{d}}\left(\int_{B(t)}|\mathcal{F}[|u|^{\frac{4}{d-2}}u]|^{\frac{2d}{d-2}}d\xi\right)^{\frac{d-2}{d}}
     \le C r(t)^2\|u\|_{\dot{H}^1}^{\frac{2(d+2)}{d-2}}.
\end{displaymath}
Hence,
\begin{equation} \label{eq:non-linear-part}
\int_{B(t)} \left(\int_{0}^te^{-(t-s)|\xi|^2}|\xi||\mathcal{F}[|u|^{\frac{4}{d-2}}u]|ds\right)^2d\xi \le C  r(t)^4\left(\int_0^t\|u(s)\|_{\dot{H}^1}^{\frac{d+2}{d-2}}ds\right)^2.
\end{equation}
By plugging \eqref{eq:decay-linear-part} and \eqref{eq:non-linear-part} into \eqref{eqn:key-inequality}, there follows
\begin{equation}
\label{eqn:estimate}
\frac{d}{dt}   \left( g(t)  \Vert u (t) \Vert _{\dot{H} ^1} ^2 \right)  \leq C g'(t)  (1 + t) ^{- \left( \frac{d}{2} + q^{\ast} \right)}  + C g'(t) r^4 (t)  \left( \int_0 ^t \Vert u(s)  \Vert _{\dot{H}^1} ^{\frac{d+2}{d-2}} \, ds \right) ^2.
\end{equation}
We first obtain an estimate which is valid for any dimension.  We choose $g(t) = \left[\ln (e+t) \right]^3$ to find
\begin{displaymath}
r(t) = \left(\frac{g'(t)}{\widetilde{C}g(t)}\right)^{\frac{1}{2}} = \left(  \frac{3}{\widetilde{C}(e+t) \ln (e+t)} \right) ^{\frac{1}{2}}.
\end{displaymath}
As $\Vert u(t) \Vert _{\dot{H} ^1} \leq C$ for all $ t >0$, hence from \eqref{eqn:estimate} we obtain
\[
\frac{d}{dt}   \left( \left[\ln (e+t) \right]^3  \Vert u (t) \Vert _{\dot{H} ^1} ^2 \right)  \leq C \frac{\left[\ln (e+t) \right]^2}{e+t}  (1 + t) ^{- \left( \frac{d}{2} + q^{\ast} \right)} + C \frac{1}{(e+t)^3}  t ^2.
\]
Since
\begin{align*}
\int _0 ^t \frac{\left[\ln (e+s) \right]^2}{e+s}  (1 + s) ^{- \left( \frac{d}{2} + q^{\ast} \right)} \, ds & \leq C \int _1 ^{\ln (e+s)} z^2 \, e^{-(\frac{d}{2}+ q^{\ast})z} dz \leq C,
\end{align*}
after integrating in time, we arrive at
\begin{equation}
\label{eqn:first-decay}
\Vert u(t) \Vert _{\dot{H} ^1} ^2  \leq C [\ln (e+t)]^{-2}.
\end{equation}
Notice that this estimate is valid for any $d \geq 3$ and any $q^{\ast} > - \frac{d}{2}$.

We now use \eqref{eqn:first-decay} to obtain better decay estimates for $3 \leq d \leq 10$ through a bootstrap argument.  We first prove our result for $d < 9$,  and then for $d = 10$.

Let $ 3 \leq d < 9$.  Take $g(t) = (1 + t) ^{\alpha}$, with $\alpha > \max \left\{ \frac{d}{2} + q^{\ast}, 1 \right\}$, so $ r(t) = \left (\frac{\alpha}{\widetilde{C}(1+t)} \right)^{\frac{1}{2}}$.  Note that $\alpha > 0$.  Then,  from \eqref{eqn:estimate} and \eqref{eqn:first-decay} we obtain,  after integrating,  that
\begin{displaymath}
 \Vert u (t) \Vert _{\dot{H} ^1} ^2  \leq C (1+t) ^{-\alpha} + C (1 + t) ^{- \left( \frac{d}{2} + q^{\ast} \right)} + C (1+t)^{-1}  \int _0 ^t \frac{ \Vert u (s) \Vert _{\dot{H} ^1} ^2}{[\ln (e+s)]^\frac{8}{d-2}} \, ds,
\end{displaymath}
which leads to

\begin{equation}
\label{eqn:previous-estimate}
 \Vert u (t) \Vert _{\dot{H} ^1} ^2  \leq  C (1 + t) ^{- \left( \frac{d}{2} + q^{\ast} \right)} + C(1+t)^{-1}  \int _0 ^t \frac{ \Vert u (s) \Vert _{\dot{H} ^1} ^2}{ [\ln (e+s)]^\frac{8}{d-2}} \, ds.
\end{equation}
Suppose $q^{\ast} >1-\frac{d}{2}$.  Now consider in \eqref{eqn:previous-estimate}
\begin{align}
x(t) & =  \Vert u (t) \Vert _{\dot{H} ^1} ^2,  \quad a(t) = C (1 + t) ^{- \left( \frac{d}{2} + q^{\ast} \right)} \notag \\ b(t) & = C(1+t)^{-1},  \quad k(t) = \frac{1}{[\ln (e+t)]^\frac{8}{d-2}}.  \notag
\end{align}
Note that,  as $ d < 10,$
\begin{align}
\label{eqn:integral-kb1}
\int_s ^t b(r) \, k(r) \, dr & = C\int _s ^t \frac{1}{(1 + r) [\ln (e+r)]^\frac{8}{d-2}} \, dr  \leq C,  \notag \\ \int_0 ^t a(s) \, k(s) \, ds & =C \int _0 ^t \frac{1}{(1 + s) ^{\frac{d}{2} + q^{\ast}} [\ln (e+s)]^\frac{8}{d-2}} \, ds  \leq C,
\end{align}
hence using Proposition \ref{gronwall-1}  in \eqref{eqn:previous-estimate}  we obtain
\begin{equation*}
\Vert u (t) \Vert _{\dot{H} ^1} ^2  \leq  C (1 + t) ^{- \left( \frac{d}{2} + q^{\ast} \right)} + C (1+t)^{-1} \leq  C (1+t)^{-1}.
\end{equation*}
Now we consider $q^{\ast} \leq 1-\frac{d}{2}$. We rewrite (\ref{eqn:previous-estimate}) as
\begin{displaymath}
(1 + t)  \Vert u (t) \Vert _{\dot{H} ^1} ^2  \leq  C (1 + t) ^{1 - \left( \frac{d}{2} + q^{\ast} \right)} + C  \int _0 ^t \frac{ (1 + s) \Vert u (s) \Vert _{\dot{H} ^1} ^2}{(1 + s)  [\ln (e+s)]^\frac{8}{d-2}} \, ds.
\end{displaymath}
Let

\begin{displaymath}
\psi (t) = (1 + t)  \Vert u (t) \Vert _{\dot{H} ^1} ^2,  \, a(t) = C (1 + t) ^{1 - \left( \frac{d}{2} + q^{\ast} \right)}, \, k(t) = \frac{ C}{(1 + t)  [\ln (e+t)]^\frac{8}{d-2}}.
\end{displaymath}
Notice that $q^{\ast} \leq 1-\frac{d}{2}$ implies that $a(t)$ is non-decreasing,  then Proposition \ref{gronwall-2} and \eqref{eqn:integral-kb1} lead to
\begin{displaymath}
(1 + t)  \Vert u (t) \Vert _{\dot{H} ^1} ^2  \leq  C (1 + t) ^{1 - \left( \frac{d}{2} + q^{\ast} \right)}.
\end{displaymath}
Then  for $d < 10$,
\[ \Vert u (t) \Vert ^2 _{\dot{H} ^1} \leq C (1+t) ^{- \min \left\{ \frac{d}{2} + q^{\ast}, 1 \right\}}.\]

We now deal with the case $ d=10$.  From \eqref{eqn:previous-estimate} we have that
\begin{equation}
\label{eqn:previous-estimate-d10}
 \Vert u (t) \Vert _{\dot{H} ^1} ^2  \leq  C (1 + t) ^{- \left( \frac{d}{2} + q^{\ast} \right)} + C(1+t)^{-1}  \int _0 ^t \frac{ \Vert u (s) \Vert _{\dot{H} ^1} ^2}{ \ln (e+s)} \, ds.
\end{equation}
Suppose $q^{\ast} >1-\frac{d}{2}$.  Now consider in \eqref{eqn:previous-estimate-d10}

\begin{align}
x(t) & =  \Vert u (t) \Vert _{\dot{H} ^1} ^2,  \quad a(t) = C (1 + t) ^{- \left( \frac{d}{2} + q^{\ast} \right)} \notag \\ b(t) & = C(1+t)^{-1},  \quad k(t) = \frac{1}{\ln (e+t)}.  \notag
\end{align}
Note that,
\begin{align*}
\int_s ^t b(r) \, k(r) \, dr & = C\int _s ^t \frac{1}{(1 + r) \ln (e+r)} \, dr  \leq C \ln [\ln(e+t)],   \\ \int_0 ^t a(s) \, k(s) \, ds & =C \int _0 ^t \frac{1}{(1 + s) ^{\frac{d}{2} + q^{\ast}} \ln (e+s)} \, ds  \leq C,\notag
\end{align*}
hence using  Proposition \ref{gronwall-1}  in \eqref{eqn:previous-estimate}  we arrive at
\begin{displaymath}
\Vert u (t) \Vert _{\dot{H} ^1} ^2   \leq  C (1 + t) ^{- \left( \frac{d}{2} + q^{\ast} \right)} + C (1+t)^{-1} \int _0 ^t \frac{[\ln(e+s)]^C}{(1 + s) ^{\frac{d}{2} + q^{\ast}} \ln (e+s)}ds  \leq  C (1+t)^{-1}.
\end{displaymath}
Now we consider $q^{\ast} \leq 1-\frac{d}{2}$. We rewrite \eqref{eqn:previous-estimate} as
\begin{displaymath}
(1 + t)  \Vert u (t) \Vert _{\dot{H} ^1} ^2  \leq  C (1 + t) ^{1 - \left( \frac{d}{2} + q^{\ast} \right)} + C  \int _0 ^t \frac{ (1 + s) \Vert u (s) \Vert _{\dot{H} ^1} ^2}{(1 + s)  \ln (e+s)} \, ds.
\end{displaymath}
Let
\begin{displaymath}
\psi (t) = (1 + t)  \Vert u (t) \Vert _{\dot{H} ^1} ^2,  \  a(t) = C (1 + t) ^{1 - \left( \frac{d}{2} + q^{\ast} \right)}, \ k(t) = \frac{ C}{(1 + t)  \ln (e+t)}.
\end{displaymath}
Notice that $q^{\ast} \leq 1-\frac{d}{2}$ implies that $a(t)$ is non-decreasing,  then Proposition \ref{gronwall-2} give us
\begin{displaymath}
(1 + t)  \Vert u (t) \Vert _{\dot{H} ^1} ^2  \leq  C (1 + t) ^{1 - \left( \frac{d}{2} + q^{\ast} \right)} [\ln(e+t)]^C.
\end{displaymath}
So,
\[ \Vert u (t) \Vert ^2 _{\dot{H} ^1} \leq C (1+t) ^{-  \left(\frac{d}{2} + q^{\ast}\right) } [\ln(e+t)]^C.\]
We will use this now to bootstrap again.
Then,  from \eqref{eqn:estimate},  after integrating,  similarly to \eqref{eqn:previous-estimate} we obtain that

\begin{equation*}
 \Vert u (t) \Vert _{\dot{H} ^1} ^2  \leq  C (1 + t) ^{- \left( \frac{d}{2} + q^{\ast} \right)} + C(1+t)^{-1}  \int _0 ^t \Vert u (s) \Vert _{\dot{H} ^1} ^2(1+s) ^{-\frac{1}{2}  \left(\frac{d}{2} + q^{\ast}\right) } [\ln(e+s)]^\frac{C}{2} \, ds.
\end{equation*}
We re-write
\begin{equation*}
(1+t) \Vert u (t) \Vert _{\dot{H} ^1} ^2  \leq  C (1 + t) ^{1- \left( \frac{d}{2} + q^{\ast} \right)} + C  \int _0 ^t (1+s) \Vert u (s) \Vert _{\dot{H} ^1} ^2 \frac{[\ln(e+s)]^\frac{C}{2}}{(1+s) ^{1+\frac{1}{2}  \left(\frac{d}{2} + q^{\ast}\right) }} \, ds.
\end{equation*}
Similarly, as before,
let
\begin{displaymath}
\psi (t) = (1 + t)  \Vert u (t) \Vert _{\dot{H} ^1} ^2,  \ a(t) = C (1 + t) ^{1 - \left( \frac{d}{2} + q^{\ast} \right)}, \ k(t) = \frac{C[\ln(e+t)]^\frac{C}{2}}{(1+t) ^{1+\frac{1}{2}  \left(\frac{d}{2} + q^{\ast}\right) }}.
\end{displaymath}
As
\[ \int_0^t \frac{[\ln(e+r)]^\frac{C}{2}}{(1+r) ^{1+\frac{1}{2}  \left(\frac{d}{2} + q^{\ast}\right) }} dr\leq C,\]
then by applying Proposition \ref{gronwall-2} there follows
\begin{displaymath}
(1 + t)  \Vert u (t) \Vert _{\dot{H} ^1} ^2  \leq  C (1 + t) ^{1 - \left( \frac{d}{2} + q^{\ast} \right)}.
\end{displaymath}
So,  for $d = 10$, we also have
\begin{displaymath}
\Vert u (t) \Vert ^2 _{\dot{H} ^1} \leq C (1+t) ^{- \min \left\{ \frac{d}{2} + q^{\ast}, 1 \right\}}.
\end{displaymath}
This concludes the proof of Theorem.
\end{proof}

\appendix

\section{Technical results}
\label{appendix-technical}

For the reader's convenience, we collect here some technical tools used in the proof of our main Theorems.  We start with  a perturbation result for the nonlinear heat equation.

\begin{proposition}\cite[Proposition 2.7]{MR4331259}  \label{prop:perturbation}
Let $q,\tilde{q}\in [1,\infty]$ and $\tilde{r} \in [1,\infty)$ satisfy \eqref{l:crtHS.nonlin.est:c2} and \eqref{condi-new}, respectively.
Assume that
\begin{equation}\label{new-condition1}
\frac{1}{2^*}
 - \frac{1}{q} < \frac{1}{\tilde q}
\end{equation}
and
\begin{equation}\label{new-condition2}
\frac{1}{\tilde r} < \min \left\{ \frac{d-2}{2} - \frac{d}{2 \tilde q}, \frac{1}{2^*-2} \right\}.
\end{equation}
Let
$v$ satisfy
\[
	\|v\|_{L^\infty([0,\infty); \dot H^1)} +
	\|v\|_{\mathcal K^{\tilde q}_{\tilde r}} \le M,
\]
and the equation
\[
	\partial_t v - \Delta v = |v|^{2^*-2}v + e
\]
with initial data $v(0) = v_0 \in \dot H^1(\RR^d)$, where $e=e(t,x)$ is a function on $(0,\infty) \times \mathbb R^d$.
Then there exist constants $\delta_0=\delta_0(M)>0$ and $C=C(M)>0$ such that
the following assertion holds: if the error term $e$ and a function $u_0 \in \dot H^1(\mathbb R^d)$ satisfy
\[
	\delta := \| u_0 - v_0\|_{\dot H^1} + \left\|\int_0^t e^{(t-s)\Delta}(e(s))\,ds\right\|_{\mathcal K^q} \le \delta_0,
\]
then there exists a unique solution $u$ to \eqref{eq:critical-nonlinear-heat} on $(0,\infty)\times \mathbb R^d$ with $u(0)=u_0$ satisfying
\[
	\|u-v \|_{L^\infty([0,\infty); \dot H^1)\cap \mathcal K^q}\le C \delta.
\]
\end{proposition}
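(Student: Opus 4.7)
The plan is to set $w = u - v$ and reduce everything to a fixed-point problem for $w$. Since $v$ satisfies $\partial_t v - \Delta v = |v|^{2^*-2}v + e$, the difference $w$ solves the Duhamel identity
\begin{equation*}
w(t) = e^{t\Delta}(u_0 - v_0) + \int_0^t e^{(t-s)\Delta}\bigl[|v+w|^{2^*-2}(v+w) - |v|^{2^*-2}v\bigr](s)\,ds - \int_0^t e^{(t-s)\Delta} e(s)\,ds,
\end{equation*}
with $w(0) = u_0 - v_0$. The aim is to solve this equation via contraction in a small ball of $L^\infty([0,\infty); \dot{H}^1) \cap \mathcal K^q$ of radius proportional to $\delta$, and then recover the bound $\|u - v\|_{L^\infty_t \dot H^1 \cap \mathcal K^q} \le C(M)\delta$.

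The linear contribution is estimated by $\|e^{t\Delta}(u_0 - v_0)\|_{L^\infty_t \dot H^1 \cap \mathcal K^q} \le C\|u_0 - v_0\|_{\dot H^1}$ (Sobolev embedding plus heat kernel smoothing), while the forcing from $e$ is already small in $\mathcal K^q$ by assumption. For the nonlinear difference one invokes the elementary pointwise bound
\begin{equation*}
\bigl||v+w|^{2^*-2}(v+w) - |v|^{2^*-2}v\bigr| \le C\bigl(|v|^{2^*-2} + |w|^{2^*-2}\bigr)|w|,
\end{equation*}
valid since $2^* - 1 \ge 1$ for $d \ge 3$. Combining Hölder's inequality in the time--space variables (distributing $|v|^{2^*-2}$ into $\mathcal K^{\tilde q}_{\tilde r}$ and the remaining factor of $w$ into $\mathcal K^q$) with the standard heat-semigroup smoothing, one arrives on any subinterval $I$ at an estimate of the shape
\begin{equation*}
\Bigl\|\int_0^t e^{(t-s)\Delta}[\,\cdots\,](s)\,ds\Bigr\|_{\mathcal K^q(I)\cap L^\infty_I \dot H^1} \le C\|v\|_{\mathcal K^{\tilde q}_{\tilde r}(I)}^{2^*-2}\|w\|_{\mathcal K^q(I)} + C\|w\|_{\mathcal K^q(I)}^{2^*-1}.
\end{equation*}
Since globally $\|v\|_{\mathcal K^{\tilde q}_{\tilde r}}^{2^*-2}$ is only bounded by $M^{2^*-2}$, one partitions $[0,\infty)$ into a finite number $N = N(M,\eta)$ of subintervals $I_j$ on which $\|v\|_{\mathcal K^{\tilde q}_{\tilde r}(I_j)} \le \eta$ with $C\eta^{2^*-2} < 1/2$, and runs the contraction on each $I_j$, using the value of $w$ at the left endpoint of $I_j$ as initial datum. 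Choosing $\delta_0 = \delta_0(M)$ small enough so that the "carry-over" from one interval to the next stays in the small-data regime, the iteration closes and yields the desired global bound on $w$; uniqueness follows from the same contraction.

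The main obstacle is the exponent bookkeeping underlying the nonlinear estimate: one must verify that, after the Hölder split, the factor $|v|^{2^*-2}|w|$ (respectively $|w|^{2^*-1}$) lands in the space-time Lebesgue space dual to the range in which $\int_0^t e^{(t-s)\Delta}$ sends things back to $\mathcal K^q$. The hypotheses \eqref{l:crtHS.nonlin.est:c2}--\eqref{condi-new} together with \eqref{new-condition1}--\eqref{new-condition2} are precisely the scaling and integrability restrictions ensuring that the time weight $t^\kappa$ in the $\mathcal K^{\tilde q}_{\tilde r}$ norm and the corresponding weight in $\mathcal K^q$ combine compatibly (in particular \eqref{new-condition2} guarantees integrability of the resulting time factor near $t = 0$ and near $t = \infty$), so that the contraction inequality above is genuinely valid on each subinterval $I_j$. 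Checking that these exponent conditions force the Hölder pairings to be admissible is the delicate technical core of the argument.
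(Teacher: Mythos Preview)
The paper does not give its own proof of this proposition: it is simply quoted from \cite[Proposition 2.7]{MR4331259} in the appendix of technical results, so there is nothing to compare against here. Your outline is the standard long-time perturbation argument (Duhamel for $w=u-v$, pointwise difference bound on the nonlinearity, partition of $[0,\infty)$ into finitely many subintervals on which $\|v\|_{\mathcal K^{\tilde q}_{\tilde r}(I_j)}$ is small using $\tilde r<\infty$, then iterate the contraction), and this is indeed the approach taken in the cited reference; the exponent bookkeeping you flag is exactly what conditions \eqref{l:crtHS.nonlin.est:c2}, \eqref{condi-new}, \eqref{new-condition1}, \eqref{new-condition2} are designed to make work.
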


The following linear profile decomposition is crucial to establishing the existence of a critical element.

\begin{proposition}\cite[Proposition 2.14]{MR4331259}
\label{prop:profile}
Let $\{\phi_n\}_{n=1}^\infty$ be a bounded sequence of functions in $\dot{H}^1(\mathbb R^d)$.
Then, after possibly passing to a subsequence {\rm (}in which case, we rename it $\phi_n${\rm )}, there exist $J^* \in \{1,2,\ldots,\infty\}$, $\{\psi^j\}_{j=1}^{J^*} \subset \dot{H}^1(\mathbb R^d)$, $\{\lambda_n^j\}_{j=1}^{J^*} \subset (0,\infty)$, and $\{x_n^j\}_{j=1}^{J^*} \subset \mathbb R^d$ such that for $1\le J \le J^*$
\begin{equation}\label{profile_0}
	\phi_n(x) = \sum_{j=1}^{J} \frac{1}{(\lambda_n^j)^{\frac{d-2}{2}}} \psi^j\left( \frac{x-x_n^j}{\lambda_n^j}\right) + w_n^J(x),
\end{equation}
where $w_n^J \in \dot{H}^1(\mathbb R^d)$ is such that
\begin{equation}\label{profile_1}
	\limsup_{J\to J^*}
	\lim_{n\to\infty} \|e^{t\Delta}w_n^J\|_{\mathcal K^q}
	= 0,
\end{equation}
\begin{equation}\label{profile_2}
	(\lambda_n^j)^{\frac{d-2}{2}} w_n^J(\lambda_n^j x + x_n^j) \rightharpoonup 0
	\quad \text{in } \dot{H}^1(\mathbb R^d) \text{ as }\ n\to\infty
\end{equation}
for any $1\le j \le J$, and
\begin{equation}\label{profile_3}
	x_n^j \equiv 0 \quad \text{or} \quad |x_n^j|\to\infty \text{ and }\frac{|x_n^j|}{\lambda_n^j}\to\infty \text{ as $n\to\infty$}\quad \text{for }1\le j \le J^*.
\end{equation}
Moreover, the scaling and translation parameters are asymptotically orthogonal in the sense that
\begin{equation}\label{profile_4}
	\frac{\lambda_n^j}{\lambda_n^i} + \frac{\lambda_n^i}{\lambda_n^j} + \frac{|x_n^i - x_n^j|^2}{\lambda_n^j\lambda_n^i}
	\to + \infty
\end{equation}
as $n\to \infty$ for any $i\not = j$. Furthermore, for any $1\le J \le J^*$, we have the following decoupling properties{\rm :}
\begin{equation}\label{profile_5}
	\lim_{n\to\infty}\Big|\|\phi_n\|_{\dot{H}^1}^2 - \sum_{j=1}^J \|\psi^j\|_{\dot{H}^1}^2 - \|w_n^J\|_{\dot{H}^1}^2\Big|=0,
\end{equation}
\begin{equation}\label{profile_6}
	\lim_{n\to\infty}\Big|
	\|\phi_n\|_{L^{2^*}}^{2^*} - \sum_{j=1}^J \|\psi^j\|_{L^{2^*}}^{2^*}
	- \|w_n^J\|_{L^{2^*}}^{2^*}\Big| =0.
\end{equation}
Especially,
\begin{equation}\label{decop-E-phi_n}
	\lim_{n\to\infty}\Big|
	E(\phi_n) - \sum_{j=1}^J E(\psi^j) - E(w_n^J)
	\Big| = 0,
\end{equation}
\begin{equation}\label{decop-J-phi_n}
	\lim_{n\to\infty}\Big|
	J(\phi_n) - \sum_{j=1}^J J(\psi^j) - J(w_n^J)
	\Big| = 0
\end{equation}
for any $1\le J \le J^*$.
\end{proposition}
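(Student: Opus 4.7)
The plan is to implement the by now classical extraction scheme of G\'erard and Keraani, adapted to the parabolic setting where the relevant Strichartz-type norm is $\|e^{t\Delta}\cdot\|_{\mathcal K^q}$. The scheme rests on three pillars: a refined embedding that detects concentration, an iterative extraction of bubbles, and a Brezis--Lieb type decoupling in $L^{2^*}$.

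First I would establish a \emph{refined Sobolev inequality} of the form
\[
\|e^{t\Delta} f\|_{\mathcal K^q} \le C\,\|f\|_{\dot{H}^1}^{\theta}\,\|f\|_{\dot{B}^{-s}_{\infty,\infty}}^{1-\theta},
\]
for suitable $\theta\in(0,1)$ and $s>0$ determined by $q$ and the scaling of \eqref{eq:critical-nonlinear-heat}. One direction uses the heat-kernel smoothing and Bernstein's inequality on Littlewood--Paley blocks; the Besov norm of negative index is tailor-made to witness the failure of compactness of the critical embedding. From this one deduces the \textbf{concentration lemma}: if $\|\phi_n\|_{\dot H^1}\le A$ and $\liminf_n \|e^{t\Delta}\phi_n\|_{\mathcal K^q}\ge \eta>0$, then, passing to a subsequence, there exist $(\lambda_n,x_n)\in(0,\infty)\times\mathbb R^d$ and a nonzero $\psi\in \dot H^1$ such that $\lambda_n^{(d-2)/2}\phi_n(\lambda_n\cdot+x_n)\rightharpoonup \psi$ weakly in $\dot H^1$, with the quantitative estimate $\|\psi\|_{\dot H^1}\ge c(\eta/A)^{1/(1-\theta)}$.

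Next I would run the \textbf{iterative extraction}. Set $w_n^0=\phi_n$. At step $J$, if $\lim_n\|e^{t\Delta}w_n^{J-1}\|_{\mathcal K^q}=0$ we stop (so $J^*=J-1$). Otherwise the concentration lemma yields $(\psi^J,\lambda_n^J,x_n^J)$, and we define $w_n^J=w_n^{J-1}-(\lambda_n^J)^{-(d-2)/2}\psi^J\big((\cdot-x_n^J)/\lambda_n^J\big)$. The weak convergence property \eqref{profile_2} at level $J$ follows directly from the construction; orthogonality \eqref{profile_4} between different profiles is forced by the fact that, were two parameter sequences non-orthogonal, the corresponding translated/rescaled limits would interact and one of the extracted $\psi^j$ would have to vanish. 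The Pythagorean identity from weak convergence gives
\[
\|w_n^{J-1}\|_{\dot H^1}^2 = \|w_n^J\|_{\dot H^1}^2 + \|\psi^J\|_{\dot H^1}^2 + o_n(1),
\]
which iterated yields \eqref{profile_5}; since $\sum_j\|\psi^j\|_{\dot H^1}^2\le A^2$, the quantitative lower bound in the concentration lemma forces $\eta_J:=\limsup_n\|e^{t\Delta}w_n^J\|_{\mathcal K^q}\to 0$ as $J\to J^*$, giving \eqref{profile_1}. The normal form \eqref{profile_3} is achieved by a diagonal extraction: if $|x_n^j|/\lambda_n^j$ stays bounded, one translates to place $x_n^j\equiv 0$ and redefines $\psi^j$ accordingly.

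Finally, the $L^{2^*}$-decoupling \eqref{profile_6} is the usual \textbf{Brezis--Lieb with parameters}: writing $\phi_n=\sum_{j\le J}\psi_n^j+w_n^J$, one expands $|\phi_n|^{2^*}$, uses the asymptotic orthogonality \eqref{profile_4} to kill all cross terms in the limit (changes of variables send supports apart, exploiting that $\psi^j$ can be approximated by $C_c^\infty$ functions), and applies the scalar Brezis--Lieb lemma to the diagonal terms involving $w_n^J$. Identities \eqref{decop-E-phi_n} and \eqref{decop-J-phi_n} for the energy and Nehari functionals are then immediate linear combinations of \eqref{profile_5}--\eqref{profile_6}.

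The main obstacle is the refined Sobolev inequality underlying the concentration lemma, together with the verification that the heat-equation Strichartz-type norm $\mathcal K^q$ really does capture the $\dot B^{-s}_{\infty,\infty}$ defect of compactness; every other step (Pythagoras, Brezis--Lieb, orthogonality) is robust once this quantitative extraction is in hand.
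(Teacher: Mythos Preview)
The paper does not prove this proposition: it is quoted verbatim from \cite[Proposition~2.14]{MR4331259} and placed in the appendix of technical tools without argument. So there is no in-paper proof to compare against; your sketch is in fact the standard G\'erard--Keraani extraction scheme that underlies the result in the cited reference, and the three pillars you identify (refined Sobolev/concentration lemma, iterative bubble extraction with Pythagoras in $\dot H^1$, Brezis--Lieb decoupling in $L^{2^*}$) are exactly the right ones.

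One small point to tighten: your treatment of \eqref{profile_3} only covers the case $|x_n^j|/\lambda_n^j$ bounded, where you absorb the shift into $\psi^j$ and set $x_n^j\equiv 0$. You do not address the remaining sub-case $|x_n^j|/\lambda_n^j\to\infty$ with $\{x_n^j\}$ bounded (forcing $\lambda_n^j\to 0$). In the Hardy--Sobolev setting of \cite{MR4331259} the origin is distinguished by the potential $|x|^{-\gamma}$, and this dichotomy separates profiles that interact with the singularity from those that do not; here, with $\gamma=0$, the equation is translation invariant and the dichotomy is largely cosmetic --- and indeed the proof of Theorem~\ref{thm:global-dynamics} in the paper never invokes \eqref{profile_3}, relying only on \eqref{profile_1}, \eqref{profile_4}, and the decouplings \eqref{profile_5}--\eqref{decop-J-phi_n}. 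Still, if you want the statement exactly as written, you should note that when $x_n^j$ stays bounded one may pass to a further subsequence with $x_n^j\to x_\ast^j$ and then shift the entire sequence $\phi_n$ by $-x_\ast^j$ profile by profile (the orthogonality \eqref{profile_4} and all decouplings are translation invariant), reducing again to the first alternative.
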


\begin{lemma}\cite[Lemma 2.13]{MR4331259} \label{lem:EJ}
Let $\{f_j\}_{j=1}^J \subset  \dot{H}^1(\mathbb R^d)$.
Suppose that there exist $\varepsilon>0$ and $0<\delta<E(W)$ with $2 \varepsilon < \delta$ such that
\begin{equation*}
	E\bigg( \sum_{j=1}^J f_j\bigg) < E(W) - \delta,\quad E\bigg( \sum_{j=1}^J f_j\bigg) > \sum_{j=1}^J E( f_j) - \varepsilon,
\end{equation*}
\begin{equation*}
	J\bigg( \sum_{j=1}^J f_j\bigg) \ge -\varepsilon,\quad J\bigg( \sum_{j=1}^J f_j\bigg) \le \sum_{j=1}^J J( f_j) + \varepsilon.
\end{equation*}
Then,
\[
	0 \le E (f_j) < E(W)\quad \text{and}\quad J(f_j) \ge0
\]
for any $1 \le j \le J$.
\end{lemma}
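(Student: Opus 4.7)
I would deduce both assertions from the identity
\begin{equation*}
E(f) = \frac{1}{d}\|\nabla f\|_{L^2}^2 + \frac{1}{2^*}J(f), \qquad f\in\dot{H}^1(\RR^d),
\end{equation*}
which is immediate from $\tfrac{1}{2}-\tfrac{1}{2^*}=\tfrac{1}{d}$, together with the variational characterization $E(W)=\inf\{E(\phi):\phi\in\dot{H}^1(\RR^d)\setminus\{0\},\ J(\phi)=0\}$ recalled in the main text. The strategy is to first establish $J(f_j)\ge 0$ for every $j$; once this is in hand, the identity immediately forces $E(f_j)\ge 0$, and the upper bound $E(f_j)<E(W)$ will follow from the almost-decoupling hypothesis on $E$.

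To prove $J(f_j)\ge 0$ for all $j$, I argue by contradiction and suppose $J(f_{j_0})<0$ for some $j_0$. Then $f_{j_0}\neq 0$, and since $\lambda\mapsto J(\lambda f_{j_0})=\lambda^2\|\nabla f_{j_0}\|_{L^2}^2-\lambda^{2^*}\|f_{j_0}\|_{L^{2^*}}^{2^*}$ is positive for small $\lambda>0$ and negative at $\lambda=1$, the intermediate value theorem produces $\lambda_0\in(0,1)$ with $J(\lambda_0 f_{j_0})=0$. The variational characterization then gives $E(\lambda_0 f_{j_0})\ge E(W)$, and evaluating the identity at $\lambda_0 f_{j_0}$ yields $E(\lambda_0 f_{j_0})=\tfrac{1}{d}\lambda_0^2\|\nabla f_{j_0}\|_{L^2}^2$. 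Hence
\begin{equation*}
\|\nabla f_{j_0}\|_{L^2}^2\ \ge\ \lambda_0^{-2}\,dE(W)\ >\ dE(W)\ =\ \|\nabla W\|_{L^2}^2.
\end{equation*}

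Summing the identity over $j$, using this strict lower bound together with the trivial bound $\|\nabla f_j\|_{L^2}^2\ge 0$ for the remaining indices, and invoking the hypotheses on $J$ to obtain $\sum_j J(f_j)\ge J(\sum_j f_j)-\varepsilon\ge -2\varepsilon$, gives
\begin{equation*}
\sum_j E(f_j)\ =\ \frac{1}{d}\sum_j\|\nabla f_j\|_{L^2}^2\ +\ \frac{1}{2^*}\sum_j J(f_j)\ >\ E(W)\ -\ \frac{2\varepsilon}{2^*}.
\end{equation*}
On the other hand, the two hypotheses on $E$ combine to $\sum_j E(f_j)<E(W)-\delta+\varepsilon$. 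Putting these together yields $\delta<\varepsilon\bigl(1+\tfrac{2}{2^*}\bigr)$, and since $2^*>2$ for $d\ge 3$ this forces $\delta<2\varepsilon$, contradicting the standing hypothesis $2\varepsilon<\delta$.

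With $J(f_j)\ge 0$ established for every $j$, the identity gives $E(f_j)\ge 0$, and using $E(f_k)\ge 0$ for all $k$ I conclude
\begin{equation*}
E(f_j)\ \le\ \sum_k E(f_k)\ <\ E(W)-\delta+\varepsilon\ <\ E(W),
\end{equation*}
where the last inequality again uses $\varepsilon<\delta$. The main obstacle is finding the right combination of the two almost-decoupling inequalities: neither the one for $E$ nor the one for $J$ is strong enough on its own. What makes the argument work is the identity above, which pools the two near-errors into a single estimate, and whose coefficient $\tfrac{1}{2^*}<\tfrac{1}{2}$ is precisely what allows the gap $2\varepsilon<\delta$ to close the contradiction in every dimension $d\ge 3$.
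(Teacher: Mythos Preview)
Your proof is correct. The paper does not supply its own proof of this lemma: it is stated in the appendix of technical results and attributed to \cite[Lemma~2.13]{MR4331259}, so there is no in-paper argument to compare against. Your approach---the identity $E(f)=\tfrac{1}{d}\|\nabla f\|_{L^2}^2+\tfrac{1}{2^*}J(f)$, the scaling argument producing $\lambda_0\in(0,1)$ with $J(\lambda_0 f_{j_0})=0$, and the variational characterization of $W$---matches the circle of ideas the paper itself invokes in Section~\ref{proof-global-dynamics} when discussing the equivalence of $J(\phi)\ge 0$ and $\|\nabla\phi\|_{L^2}<\|\nabla W\|_{L^2}$, so your argument is entirely in the spirit of the surrounding text and presumably of the original reference as well.
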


In order to prove  Proposition \ref{lyapunov}, we need the following Lemma.

\begin{lemma}  \cite[Lemma 5.10]{MR2768550} \label{lemma-bahouri-chemin-danchin} Let $v$ be the solution in $C \left(  [0,T];\mathcal{S}' (\RR^d) \right)$ to 
\begin{align*}
\partial_t v - \nu \Delta v & =  f \in L^2(0,T; \dot{H}^{s-1}) , \\
v(0) & = v_0 \in \dot{H}^s (\RR^d).
\end{align*}
Then $v \in L^p(0,T; \dot{H} ^{s + \frac{2}{p}}) \cap C([0,T]; \dot{H}^s)$, for all $p \geq 2$ and
\begin{displaymath}
    \Vert v(t) \Vert ^2 _{\dot{H}^s} + 2 \nu \int _0 ^t \Vert \nabla v( \tau) \Vert ^2 _{\dot{H}^s} \, d \tau = \Vert v_0 \Vert ^2 _{\dot{H}^s} + 2 \int_0^t \langle f(\tau), v(\tau)\rangle_{\dot{H}^s)} d \tau .
\end{displaymath}   
\end{lemma}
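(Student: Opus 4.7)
The plan is to prove both the regularity and the energy identity by working in Fourier space with the explicit Duhamel representation
\[
\hat{v}(t,\xi)=e^{-\nu t|\xi|^2}\hat{v}_0(\xi)+\int_0^t e^{-\nu(t-\tau)|\xi|^2}\hat{f}(\tau,\xi)\,d\tau,
\]
which is the unique tempered distributional solution to the given Cauchy problem.

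First, I would establish the regularity $v\in L^p(0,T;\dot H^{s+2/p})\cap C([0,T];\dot H^s)$. For the homogeneous part $e^{\nu t\Delta}v_0$, Plancherel together with the pointwise bound $|\xi|^{2/p}e^{-\nu t|\xi|^2}\lesssim t^{-1/p}$ yields the $L^p_t\dot H^{s+2/p}_x$ estimate for $p>2$, and the case $p=2$ is simply Plancherel in both variables, giving $\|e^{\nu t\Delta}v_0\|_{L^2_t\dot H^{s+1}}^2\lesssim \|v_0\|_{\dot H^s}^2$. For the Duhamel term, Minkowski's inequality and Young's convolution inequality in time produce the same bounds, using that $|\xi|^{s-1}\hat f\in L^2_tL^2_\xi$. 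Continuity in $t$ with values in $\dot H^s$ follows from density of smooth data and strong continuity of the heat semigroup on $\dot H^s$.

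Second, I would derive the identity through regularization. Mollify the data in space (and $f$ in time) to obtain $v_0^\varepsilon\in\mathcal S(\RR^d)$ and smooth $f^\varepsilon$ with $v_0^\varepsilon\to v_0$ in $\dot H^s$ and $f^\varepsilon\to f$ in $L^2(0,T;\dot H^{s-1})$. The corresponding solutions $v^\varepsilon$ are classical and smooth, so one computes directly
\[
\tfrac{d}{dt}\|v^\varepsilon(t)\|_{\dot H^s}^2=2\langle\Lambda^s v^\varepsilon,\Lambda^s\partial_tv^\varepsilon\rangle_{L^2},
\]
substitutes $\partial_tv^\varepsilon=\nu\Delta v^\varepsilon+f^\varepsilon$, and integrates by parts to obtain $-2\nu\|\nabla v^\varepsilon\|_{\dot H^s}^2+2\langle f^\varepsilon,v^\varepsilon\rangle_{\dot H^s}$. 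Equivalently, one may argue in Fourier variables from $\partial_t(|\xi|^{2s}|\hat v^\varepsilon|^2)=-2\nu|\xi|^{2s+2}|\hat v^\varepsilon|^2+2\mathrm{Re}(|\xi|^{2s}\bar{\hat v}^\varepsilon\hat f^\varepsilon)$ and integrate in $\xi$ and $t$ to obtain the identity for $v^\varepsilon$.

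Third, I would pass to the limit $\varepsilon\to 0$. The linear estimates from the first step, applied to the differences $v^\varepsilon-v$, give $v^\varepsilon\to v$ in $C([0,T];\dot H^s)\cap L^2(0,T;\dot H^{s+1})$, so $\|v^\varepsilon(t)\|_{\dot H^s}^2\to\|v(t)\|_{\dot H^s}^2$ uniformly in $t$, the dissipation converges in $L^1_t$, and the bracket $\langle f^\varepsilon,v^\varepsilon\rangle_{\dot H^s}$ converges in $L^1_t$ by combining the $L^2_t\dot H^{s-1}$ convergence of $f^\varepsilon$ with the $L^2_t\dot H^{s+1}$ convergence of $v^\varepsilon$. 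The main obstacle I anticipate is a clean justification of the duality pairing $\langle f(\tau),v(\tau)\rangle_{\dot H^s}$, since $f$ lies only in $\dot H^{s-1}$ and not in $\dot H^s$; the resolution is to reinterpret it as the $\dot H^{s-1}\times\dot H^{s+1}$ bracket
\[
\int_{\RR^d}|\xi|^{2s}\bar{\hat v}(\tau,\xi)\hat f(\tau,\xi)\,d\xi=\int_{\RR^d}\bigl(|\xi|^{s+1}\bar{\hat v}\bigr)\bigl(|\xi|^{s-1}\hat f\bigr)d\xi,
\]
which is well defined for a.e.\ $\tau$ by Cauchy--Schwarz once the regularity $v\in L^2(0,T;\dot H^{s+1})$ is in hand.
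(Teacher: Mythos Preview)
The paper does not prove this lemma at all: it is listed in Appendix~\ref{appendix-technical} as a technical tool quoted verbatim from \cite[Lemma 5.10]{MR2768550}, with no accompanying argument. There is therefore no proof in the paper to compare your proposal against.

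Your outline is essentially the standard route (and close in spirit to the proof in the cited reference): Duhamel representation, smoothing estimates for the heat semigroup, regularization to justify the formal energy computation, and passage to the limit using the $L^2_t\dot H^{s+1}$ bound to make sense of the pairing $\langle f,v\rangle_{\dot H^s}$ as a $\dot H^{s-1}$--$\dot H^{s+1}$ duality. One small point: for $p>2$ the pointwise bound $|\xi|^{2/p}e^{-\nu t|\xi|^2}\lesssim t^{-1/p}$ alone does not place the linear evolution in $L^p_t$ near $t=0$, since $t^{-1}$ is not locally integrable; the clean way is to obtain the endpoint cases $p=2$ (direct Plancherel computation in $(t,\xi)$) and $p=\infty$ (contractivity of $e^{\nu t\Delta}$ on $\dot H^s$) and then interpolate. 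With that adjustment your plan goes through.
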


 We recall two versions of Gronwall's inequality used in the proof of Theorem \ref{thm:decay}.

\begin{proposition}\cite[Theorem 1, p. 356]{MR1190927} \label{gronwall-1} Let $x,k:J \to \RR$ continuous and $a,b: J \to \RR$ Riemann integrable in $J = [\alpha, \beta]$. Suppose that $b, k \geq 0$ in $J$. Then, if
\begin{displaymath}
x(t) \leq a(t) + b(t) \int _{\alpha} ^t k(s) x(s) \, ds, \quad t \in J
\end{displaymath}
then
\begin{displaymath}
x(t) \leq a(t) + b(t) \int _{\alpha} ^t a(s) k(s) \exp \left( \int_s ^t b(r)k(r)  \, dr \right) \, ds, \quad t \in J.
\end{displaymath}
\end{proposition}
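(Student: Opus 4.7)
The plan is to reduce the integral inequality to a first-order linear differential inequality and then integrate it against a suitable integrating factor. Set $v(t) := \int_{\alpha}^{t} k(s)\, x(s)\, ds$. Since $k$ and $x$ are continuous on $J$, the fundamental theorem of calculus gives $v \in C^1(J)$ with $v'(t) = k(t)\, x(t)$. Multiplying the hypothesis $x(t) \leq a(t) + b(t)\, v(t)$ by $k(t) \geq 0$ produces
\[
v'(t) \;=\; k(t)\, x(t) \;\leq\; a(t)\, k(t) + b(t)\, k(t)\, v(t),
\]
i.e.\ the scalar linear differential inequality $v'(t) - b(t) k(t)\, v(t) \leq a(t)\, k(t)$ on $J$, with initial value $v(\alpha) = 0$.

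Next, I would apply the integrating factor $e^{-B(t)}$, where $B(t) := \int_{\alpha}^{t} b(r)\, k(r)\, dr$. Since $b k$ is Riemann integrable and non-negative, $B$ is absolutely continuous and non-decreasing, and so is $e^{-B(t)}$, with $B'(t) = b(t) k(t)$ almost everywhere. Combining $v \in C^1$ with $e^{-B}$ absolutely continuous yields (via the Leibniz rule in the absolutely continuous framework) that $t \mapsto v(t) e^{-B(t)}$ is absolutely continuous and
\[
\frac{d}{dt}\!\left[ v(t)\, e^{-B(t)} \right] \;=\; e^{-B(t)}\bigl( v'(t) - b(t) k(t) v(t) \bigr) \;\leq\; a(t)\, k(t)\, e^{-B(t)}
\]
for almost every $t \in J$. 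Integrating from $\alpha$ to $t$, using $v(\alpha) = 0$, and noting $B(t) - B(s) = \int_{s}^{t} b(r) k(r)\, dr$, I obtain
\[
v(t) \;\leq\; \int_{\alpha}^{t} a(s)\, k(s)\, \exp\!\left( \int_{s}^{t} b(r)\, k(r)\, dr \right) ds.
\]

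Finally, I would plug this bound for $v(t)$ back into the original inequality $x(t) \leq a(t) + b(t)\, v(t)$, using once more that $b(t) \geq 0$, to recover the stated conclusion. The only subtle point is the regularity gap between the continuous data $(x,k)$ and the merely Riemann integrable data $(a,b)$: this forces the integrating factor manipulation to be carried out in the absolutely continuous setting rather than the classical $C^1$ one. This is the main technical obstacle, but it is standard and requires no new idea beyond invoking the product rule for absolutely continuous functions.
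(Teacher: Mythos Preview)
Your argument is correct and is the standard integrating-factor proof of this Gronwall-type inequality. Note that the paper does not supply its own proof of this proposition: it is quoted from the reference \cite{MR1190927} and stated in the appendix purely as a tool, so there is nothing to compare against.
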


\begin{proposition} \cite[Corollary 1.2, p. 4]{MR1171448} \label{gronwall-2} Let $a,k,\psi,:J \to \RR$ continuous in $J = [\alpha, \beta]$ and $ k \geq 0$.  If $a(t)$ is nondecreasing then
\begin{displaymath}
\psi(t) \leq a(t) +  \int _{\alpha} ^t k(s) \psi(s) \, ds, \quad t \in J
\end{displaymath}
implies
\begin{displaymath}
\psi(t) \leq a(t)  \exp \left( \int_{\alpha} ^t k(s)  \, ds \right) \quad t \in J.
\end{displaymath}
\end{proposition}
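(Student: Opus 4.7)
The plan is to exploit the monotonicity of $a$ to reduce the integral inequality on a subinterval to a constant-coefficient linear differential inequality, then apply an integrating factor. The point is that without the monotonicity assumption the best one could do is Proposition \ref{gronwall-1}, where $a$ must stay inside the integral; here monotonicity lets us pull $a$ outside and replace it by a constant on each subinterval.

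Concretely, I would fix an arbitrary $t_0 \in J$ and work on $[\alpha, t_0]$. For every $s \in [\alpha, t_0]$, the monotonicity of $a$ gives $a(s) \le a(t_0)$, so the hypothesis becomes
\[
  \psi(s) \le a(t_0) + \int_\alpha^s k(r)\psi(r)\,dr, \qquad s \in [\alpha,t_0].
\]
Next I would introduce $\phi(s) := \int_\alpha^s k(r)\psi(r)\,dr$, which is $C^1$ with $\phi(\alpha)=0$ and $\phi'(s)=k(s)\psi(s)$. Substituting the displayed inequality into $\phi'$ yields a linear differential inequality $\phi'(s) \le k(s)\phi(s) + k(s) a(t_0)$ on $[\alpha,t_0]$.

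I would then multiply by the integrating factor $\exp\bigl(-\int_\alpha^s k(r)\,dr\bigr)$ and integrate from $\alpha$ to $t_0$; since $k \ge 0$ and $\phi(\alpha)=0$, a routine computation produces
\[
  \phi(t_0) \le a(t_0)\left[\exp\left(\int_\alpha^{t_0} k(r)\,dr\right) - 1\right].
\]
Feeding this back into the original inequality at $s=t_0$ gives $\psi(t_0) \le a(t_0) + \phi(t_0) \le a(t_0)\exp\bigl(\int_\alpha^{t_0} k(r)\,dr\bigr)$, which is precisely the claimed bound. Since $t_0 \in J$ was arbitrary, the inequality holds throughout $J$.

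There is essentially no hard step: everything reduces to the standard linear first-order ODE trick once the monotonicity is used to freeze $a(s)$ at $a(t_0)$. The only point one must be a bit careful about is to fix $t_0$ first and then apply monotonicity, rather than attempting to estimate directly on all of $J$ at once; doing the latter would force $a(s)$ to remain inside the integral and yield only the weaker Pachpatte-type conclusion of Proposition \ref{gronwall-1}. Continuity of $a$, $k$, $\psi$ guarantees that $\phi$ is differentiable and that the integrating factor manipulation is legitimate in the classical sense, so no measure-theoretic subtleties arise.
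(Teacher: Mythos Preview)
Your argument is correct and is the standard proof of this Gronwall-type corollary: freeze $t_0$, use monotonicity to replace $a(s)$ by the constant $a(t_0)$ on $[\alpha,t_0]$, then apply the integrating-factor trick to the $C^1$ function $\phi(s)=\int_\alpha^s k(r)\psi(r)\,dr$. Every step is justified under the stated hypotheses (continuity of $a,k,\psi$ and $k\ge 0$), and the sign of $a(t_0)$ plays no role since $k\ge 0$ is what allows one to multiply the inequality $\psi(s)\le a(t_0)+\phi(s)$ by $k(s)$.

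As for comparison with the paper: there is nothing to compare. The paper does not prove Proposition~\ref{gronwall-2}; it merely quotes it from Ba\u{\i}nov--Simeonov \cite[Corollary 1.2, p.~4]{MR1171448} as a technical tool used in the proof of Theorem~\ref{thm:decay}. Your write-up supplies the (short, classical) argument that the paper omits.
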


\end{document}